\documentclass[a4paper,10pt]{article}
\usepackage{mathtext}
\usepackage[T1,T2A]{fontenc}
\usepackage[cp1251]{inputenc}
\usepackage[english]{babel}
\usepackage{amsmath}
\usepackage{amsfonts}
\usepackage{amssymb}
\usepackage{mathrsfs}
\usepackage{amsthm}
\usepackage{textcomp}
\usepackage{euscript}

\newcommand{\eps}{\varepsilon}

\DeclareMathOperator{\Ker}{Ker}

\newcommand{\Bell}{\boldsymbol{B}}

\DeclareMathOperator{\E}{\mathbb{E}}
\DeclareMathOperator{\M}{\EuScript{M}}

\newtheorem{Def}{Definition}
\newtheorem{Rem}{Remark}

\renewcommand{\leq}{\leqslant}
\renewcommand{\geq}{\geqslant}

\newcommand{\gengrad}{\ensuremath{\mathaccent\cdot\nabla}}
\theoremstyle{plain}\newtheorem{Th}{Theorem}

\theoremstyle{plain}\newtheorem{Cj}{Conjecture}
\theoremstyle{plain}\newtheorem{Ex}{Example}
\theoremstyle{plain}\newtheorem{Le}{Lemma}

\textwidth=16cm
\oddsidemargin=0pt
\topmargin=0pt


\begin{document}
\title{Anisotropic Ornstein non inequalities}
\author{Krystian~Kazaniecki \and Dmitriy~M.~Stolyarov\thanks{Supported by RFBR grant no. 14-01-00198.} \and {Michal Wojciechowski}}

\maketitle
\begin{abstract}
We investigate existence of a priori estimates for differential operators in~$L^1$ norm: for anisotropic homogeneous differential operators $T_1, \ldots , T_{\ell}$, we study the conditions under which the inequality
\begin{equation*}
\|T_1 f\|_{L_1(\mathbb{R}^d)} \lesssim \sum\limits_{j = 2}^{\ell}\|T_j f\|_{L_1(\mathbb{R}^d)}
\end{equation*}
holds true.
We also discuss a similar problem for martingale transforms.  
\end{abstract}
\section{Introduction}\label{S1}
In his seminal paper~\cite{Ornstein}, Ornstein proved the following: let~$\{T_j\}_{j=1}^{\ell}$ be homogeneous differential operators of the same order in~$d$ variables (with constant coefficients); if the inequality
\begin{equation*}
\|T_1 f\|_{L_1(\mathbb{R}^d)} \lesssim \sum\limits_{j = 2}^{\ell}\|T_j f\|_{L_1(\mathbb{R}^d)}
\end{equation*}
holds true for any~$f \in C_0^{\infty}(\mathbb{R}^d)$, then~$T_1$ can be expressed as a linear combination of the other~$T_j$. Here and in what follows~``$a \lesssim b$'' means ``there exists a constant~$c$ such that~$a \leq cb$ uniformly'', the meaning of the word ``uniformly'' is clear from the context. For example, in the statement above, the constant should be uniform with respect to all functions~$f$. The aim of the present paper is to extend this theorem to the case where the differential operators are anisotropic homogeneous; see also~\cite{KW}, where a partial progress in this direction was obtained by a simple Riesz product technique. 

To formulate the results, we have to introduce a few notions. Each differential polynomial~$P(\partial)$ in~$d$ variables has a Newton diagram which matches a set of integral points in~$\mathbb{R}^d$ to each such polynomial. The monomial~$a\partial_1^{m_1}\partial_2^{m_2}\ldots\partial_{d}^{m_d}$ corresponds to the point~$m=(m_1,m_2,\ldots, m_d)$; for an arbitrary polynomial, its Newton diagram is the union of the Newton diagrams of its monomials. 

Let~$\Lambda$ be an affine hyperplane in~$\mathbb{R}^d$ that intersects all the positive semi-axes. We call such a plane a \emph{pattern of homogeneity}. We say that a differential polynomial is homogeneous with respect to~$\Lambda$ (or simply~$\Lambda$-homogeneous) if its Newton diagram lies on~$\Lambda$.
\begin{Cj}\label{MainTheorem}
Let~$\Lambda$ be a pattern of homogeneity in~$\mathbb{R}^d$\textup, let~$\{T_j\}_{j=1}^{\ell}$ be a collection of~$\Lambda$-homogeneous differential operators. If the inequality
\begin{equation*}
\|T_1 f\|_{L_1(\mathbb{R}^d)} \lesssim \sum\limits_{j = 2}^{\ell}\|T_j f\|_{L_1(\mathbb{R}^d)}
\end{equation*}
holds true for any~$f \in C_0^{\infty}(\mathbb{R}^d)$\textup, then~$T_1$ can be expressed as a linear combination of the other~$T_j$.
\end{Cj}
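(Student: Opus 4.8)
The natural plan is to argue by contraposition: assuming $T_1$ is not a linear combination of $T_2,\dots,T_\ell$, I would construct, for every $N$, a function $f_N\in C_0^\infty(\mathbb{R}^d)$ with $\|T_1f_N\|_{L_1}\ge N\sum_{j=2}^{\ell}\|T_jf_N\|_{L_1}$. First I would pass to the Fourier side: each $T_j$ is the Fourier multiplier with symbol a polynomial $P_j$ whose monomials correspond to the lattice points of the simplex $\Lambda\cap\mathbb{R}^d_{\ge 0}$, and $\Lambda$-homogeneity says precisely that all the $P_j$ are homogeneous of one and the same degree with respect to the anisotropic dilations $\delta_t\xi=(t^{w_1}\xi_1,\dots,t^{w_d}\xi_d)$ attached to $\Lambda$ for a suitable weight $w$. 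Since the inequality is invariant under this anisotropic rescaling and under translations — in particular scale invariant — the counterexample may be sought among functions whose frequencies lie in a prescribed lattice, and then transferred to $\mathbb{R}^d$ by multiplication by a smooth cutoff.

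A single plane wave $e^{i\xi\cdot x}$ yields only the pointwise bound $|P_1(\xi)|\lesssim\sum_{j\ge2}|P_j(\xi)|$, which is far weaker than linear dependence — for instance $|\xi_1^2\xi_2|\le\tfrac12(\xi_1^4+\xi_2^2)$ holds pointwise although $\partial_1^2\partial_2$ is not a combination of $\partial_1^4$ and $\partial_2^2$. So, as in Ornstein's original theorem, the effect to be exploited is one of cancellation: superpositions of many wave packets at carefully chosen frequencies for which the $L_1$ norms of $T_jf$, $j\ge2$, collapse through interference to much less than the sum of the individual contributions, while $\|T_1f\|_{L_1}$ stays comparable to its diagonal size. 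Concretely I would use an \emph{anisotropic Riesz product}: pick a lacunary sequence of frequencies $\xi^{(1)},\xi^{(2)},\dots$ and set $f=\prod_k\bigl(1+\eps_k\cos(\xi^{(k)}\cdot x+\theta_k)\bigr)$, a trigonometric polynomial once the product is finite. Since $T_1$ being independent of $T_2,\dots,T_\ell$ means there is a linear functional on the $\Lambda$-homogeneous polynomials annihilating $P_2,\dots,P_\ell$ but not $P_1$, I would place the $\xi^{(k)}$ so as to track the face of $\Lambda\cap\mathbb{R}^d_{\ge 0}$ carrying this obstruction — typically along a curve rather than a straight $\delta_t$-orbit — so that the relevant monomials dominate in $P_1(\xi^{(k)})$ but cancel, after interference, in each $P_j(\xi^{(k)})$ with $j\ge2$.

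The step I expect to be the main obstacle is making this work for a \emph{general} independent $T_1$, rather than only when the obstruction is concentrated at a vertex of the simplex (a pure power $\partial_i^k$) or along a one-dimensional edge — which is, in essence, what the simple Riesz-product argument of \cite{KW} already reaches. In general the independence of $P_1$ may be visible only through cancellation among several monomials spread over a higher-dimensional face of $\Lambda$, and then a single lacunary product along one curve does not detect it; one seems to need a multi-parameter construction with nested lacunarity in several independent frequency variables at once. The delicate point is that such a construction necessarily breaks the $\delta_t$-homogeneity, so the various monomials of each $P_j$ live on different scales across the parameter grid, and one must arrange the interference to annihilate all of them simultaneously while keeping $\|T_1f\|_{L_1}$ large — a genuinely multiparameter estimate.

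A structurally different route, which would bypass the explicit constructions, is to characterise the failure of such $L_1$ inequalities in the language of $\mathcal{A}$-free (Young) measures, as in modern proofs of the classical Ornstein theorem, and to reduce the statement to a spanning property of the anisotropic wave cone $\{\lambda\in\mathbb{R}^\ell:\ \lambda\parallel(P_1(\xi),\dots,P_\ell(\xi))\text{ for some }\xi\ne0\}$. Verifying that property for an arbitrary $\Lambda$-homogeneous system appears to be exactly where genuinely new input is required, and it is this gap — common to both approaches — that is the reason the statement is recorded here as a conjecture.
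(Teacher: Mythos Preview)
You are right that the statement is a conjecture and is not proved in the paper; your closing sentence is accurate. There is no proof to match against, only the paper's partial result (Theorem~\ref{MainTheoremBis}, the equal-parity case) and its reduction of the remaining gap to another conjecture.

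The paper's route differs from both of yours. Rather than building explicit bad functions by Riesz products --- the method of~\cite{KW}, which as you say reaches only special configurations on the Newton polytope --- the paper argues through a Bellman function $\Bell(e)=\inf_{\varphi}\int_{[0,1]^d}V(e+\gengrad[\varphi])$ with $V(e)=\sum_{j\ge2}|\tilde T_je|-c|\tilde T_1e|$. One shows $\Bell$ is Lipschitz, positively one-homogeneous and generalized quasi-convex (Theorem~\ref{PropertiesBell}), hence rank-one convex along the directions $e_x=\sum_{\alpha\in A} i^{|\alpha|+|\alpha_0|}x^\alpha e_\alpha$ (Theorem~\ref{RankOne}). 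When all $|\alpha|$ share one parity these directions are real and span $E$, and everything collapses to the purely geometric Theorem~\ref{SeparateTheorem}: a separately convex, positively one-homogeneous function on~$\mathbb{R}^d$ is non-negative. For mixed parity the rank-one directions are genuinely complex, cosine test functions yield only subharmonicity in two-dimensional planes, and the paper is left with Conjecture~\ref{Harmonic} as the outstanding obstruction --- a different packaging of the gap than your wave-cone spanning condition, though the two are cousins (your $\mathcal A$-free\,/\,Young-measure picture is essentially the rank-one convexity framework of~\cite{KK,KK2}, which the paper invokes as well). The Bellman reduction isolates a clean convex-geometric question; an explicit construction, were it to succeed, would instead give quantitative control on the constants, which the soft argument does not.
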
  
This conjecture may seem to be a simple generalization of Ornstein's theorem. We warn the reader that sometimes the anisotropic character of homogeneity brings new difficulties to inequalities for differential operators (the main is that one lacks geometric tools such as the isoperimetric inequality, or the coarea formula, etc.). For example, the classical embedding~$W_1^1(\mathbb{R}^d) \hookrightarrow L_{\frac{d}{d-1}}$ due to Gagliardo and Nirenberg had been generalized to the anisotropic case only in~\cite{Solonnikov} and finally in~\cite{Kolyada}; if one deals with similar embeddings for vector fields, the isotropic case was successfully considered in~\cite{vS} (see also the survey~\cite{vS2}), and there is almost no progress for anisotropic case (however, see~\cite{KMSshort,KMS}).


The method we use to attack the conjecture, differs from that of Ornstein (though there are some similarities). However, it is not new. It was noticed in~\cite{CFM} that Ornstein's theorem is related to the behavior of certain rank one convex functions. The case~$d=2$ was considered there. As for the general case of Ornstein's (isotropic) theorem, its proof via rank one convexity was announced in~\cite{KK} (and the proofs are now available in a very recent preprint~\cite{KK2}). In a sense, we follow the plan suggested in~\cite{KK}. However, the notions of quasi convexity, rank one convexity and others should be properly adjusted to the anisotropic world, we have not seen such an adjustment anywhere. For all these notions in the classical setting of the first gradient, their relationship with each other, properties, etc., we refer the reader to the book~\cite{Dacorogna}. 
The approach of rank one convexity reduces Conjecture~\ref{MainTheorem} to a certain geometric problem about separately convex functions (Theorem~\ref{SeparateTheorem}) that is covered by Theorem 1 announced in~\cite{KK} (Theorem~$1.1$ in~\cite{KK2}). We give a simple proof of this fact, which may seem the second advantage of our paper (though our proof does not give more advanced Theorem 1 of~\cite{KK}). We did not know the preprint~\cite{KK2} almost until the publication of the present text, and did our work independently. Though the spirit of our approach in the geometric part is similar to that of~\cite{KK2}, the presentation and details appear to be different. 

We will prove a particular case of Conjecture~\ref{MainTheorem}, which still seems to be rather general (in particular, it covers the classical isotropic case).
\begin{Th}\label{MainTheoremBis}
Let~$\Lambda$ be a pattern of homogeneity in~$\mathbb{R}^d$\textup, let~$\{T_j\}_{j=1}^{\ell}$ be~$\Lambda$-homogeneous differential operators. Suppose that all the monomials present in the~$T_j$ have one and the same parity of degree. If the inequality
\begin{equation}\label{MainNonInequality}
\|T_1 f\|_{L_1(\mathbb{R}^d)} \lesssim \sum\limits_{j = 2}^{\ell}\|T_j f\|_{L_1(\mathbb{R}^d)}
\end{equation}
holds true for any~$f \in C_0^{\infty}(\mathbb{R}^d)$\textup, then~$T_1$ can be expressed as a linear combination of the other~$T_j$.
\end{Th}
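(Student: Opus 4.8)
The plan is to run the rank‑one‑convexity scheme advertised in the introduction: translate~\eqref{MainNonInequality} into a statement about convexity of a fixed function on~$\mathbb{R}^{\ell}$ along a prescribed family of directions, recognize that family (thanks to the parity hypothesis) as essentially a set of coordinate axes, and then appeal to the geometric result about separately convex functions, Theorem~\ref{SeparateTheorem}.

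First I would carry out the reductions. Let $\tau_j$ be the Fourier symbol of $T_j$. If every monomial occurring in the $T_j$ has total degree of the common parity $p\in\{0,1\}$, then $i^{-p}\tau_j$ is a \emph{real} polynomial $P_j$; replacing $T_j$ by $i^{-p}T_j$ alters neither side of~\eqref{MainNonInequality}, so we may assume every $T_j$ has a real symbol and maps real functions to real functions. We may also assume $P_1,\dots,P_{\ell}$ are linearly independent as polynomials: a linear dependence among them either exhibits $T_1$ in the span of the others — and we are done — or is a dependence among $T_2,\dots,T_{\ell}$ alone, and then one of those may be discarded without changing~\eqref{MainNonInequality} up to a constant. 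Finally, using the anisotropic dilations $\delta_t(x)=(t^{\alpha_1}x_1,\dots,t^{\alpha_d}x_d)$ attached to $\Lambda$ (normalized so that $\sum_i\alpha_i m_i=1$ for every lattice point $m\in\Lambda$, which is possible since $\Lambda$ meets all positive semi‑axes), one gets $P_j(\delta_t\xi)=t\,P_j(\xi)$ for all $j$. This is the only feature of $\Lambda$‑homogeneity I intend to use, and it is what lets one bypass isoperimetry‑type tools.

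Next comes the passage to separate convexity, which is the core of the argument. For $f\in C_0^{\infty}$ put $\mathcal{T}f=(T_1f,\dots,T_{\ell}f)$ and, for $C>0$, set $\Phi_C(z)=C\sum_{j=2}^{\ell}|z_j|-|z_1|$, so that~\eqref{MainNonInequality} with constant $C$ reads $\int_{\mathbb{R}^d}\Phi_C(\mathcal{T}f)\geq 0$ for all $f$. Testing against a single anisotropic oscillation $f(x)=N^{-1}\phi(x)\cos(\langle\delta_N\mu,x\rangle)$ ($\phi$ a fixed real bump, $N\to\infty$) gives $T_jf\sim N^{-1}\tau_j(\delta_N\mu)\,\phi(x)\cos(\cdot)=P_j(\mu)\,\phi(x)\cos(\langle\delta_N\mu,x\rangle+\tfrac{p\pi}{2})$ up to lower‑order terms, so the only ``wave direction'' produced at frequency $\mu$ is the constant vector $v_\mu=(P_1(\mu),\dots,P_{\ell}(\mu))\in\mathbb{R}^{\ell}$; here the parity assumption is essential, since it is exactly what keeps the whole picture inside the \emph{real} space $\mathbb{R}^{\ell}$ rather than $\mathbb{C}^{\ell}$. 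Superposing such oscillations at well‑separated scales with slowly varying amplitudes, and checking that the cross terms stay of lower order (this is where $\delta_t$‑homogeneity of the $P_j$ is used), yields the two‑sided localisation principle: $\int\Phi(\mathcal{T}f)\geq 0$ for all $f$ forces $\Phi$ to be nonnegative at $0$ against every \emph{laminate} of barycenter $0$ supported on the cone $\bigcup_\mu\mathbb{R}v_\mu$ (note $\int T_jf=0$, as $\Lambda$ misses the origin), while conversely any such laminate with strictly negative $\Phi_C$‑barycenter can be realised by genuine $C_0^\infty$ test functions, violating~\eqref{MainNonInequality}. Since $P_1,\dots,P_\ell$ are independent the $v_\mu$ span $\mathbb{R}^{\ell}$, and aligning coordinates with a suitable finite subfamily of them turns ``laminate along the $v_\mu$'' into ``laminate along coordinate axes'', i.e. into the theory governed by separately convex functions — the setting of Theorem~\ref{SeparateTheorem}. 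The dichotomy is then immediate: if $T_1=\sum_{j\ge2}c_jT_j$ the relation $z_1=\sum c_jz_j$ survives all convex combinations, so $\Phi_C\geq 0$ on the whole laminate hull once $C\geq\max_j|c_j|$ and~\eqref{MainNonInequality} holds; if instead $T_1\notin\operatorname{span}\{T_2,\dots,T_{\ell}\}$, the cone $\{v_\mu\}$ lies in no hyperplane $\{z_1=\sum c_jz_j\}$, and in the separately convex model this is precisely the hypothesis under which Theorem~\ref{SeparateTheorem} (Theorem~1 of~\cite{KK}, Theorem~1.1 of~\cite{KK2}) furnishes, for every $C$, a finite‑order laminate of barycenter $0$ with strictly negative average of $\Phi_C$; realising it by test functions contradicts~\eqref{MainNonInequality} for all $C$. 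Hence $T_1\in\operatorname{span}\{T_2,\dots,T_{\ell}\}$.

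The hard part will be precisely the middle step: building the correct anisotropic analogue of rank‑one convexity and proving the two‑sided localisation principle in a world where the usual geometric machinery is absent. The delicate points are (a) showing that the parity hypothesis genuinely confines the wave data to constant real vectors $v_\mu\in\mathbb{R}^{\ell}$ (and that dropping it pushes the problem outside the range of Theorem~\ref{SeparateTheorem}), and (b) the construction converting an abstract finite‑order laminate back into an oscillating sequence of smooth compactly supported functions, which in the anisotropic setting must be done through the dilations $\delta_t$ with careful bookkeeping of the lower‑order cross terms generated when several oscillation scales are nested. The reductions of the first step and the final dichotomy are, by contrast, essentially formal once this machinery is in place.
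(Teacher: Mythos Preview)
Your overall plan is the same as the paper's --- reduce to rank-one convexity along the wave directions~$v_\mu$ and finish with Theorem~\ref{SeparateTheorem} --- and the identification of the parity hypothesis as the mechanism that keeps the wave data real is exactly right. Two points, however, deserve comment.

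First, the paper runs the final step in the \emph{direct} direction rather than your contrapositive. It introduces the Bellman function~$\Bell(e)=\inf_{\varphi}\int V(e+\gengrad\varphi)$, and the assumed inequality~\eqref{MainNonInequality} is used precisely to guarantee that~$\Bell$ is \emph{finite} (indeed Lipschitz). One then shows~$\Bell$ is generalized quasiconvex, hence rank-one convex along the~$e_x$ (your~$v_\mu$), and applies Theorem~\ref{SeparateTheorem} to~$\Bell$ restricted to a basis of rank-one directions to get~$\Bell\geq 0$, whence~$V\geq 0$ and~$T_1\in\operatorname{span}\{T_2,\dots,T_\ell\}$. This uses only the easy implication ``quasiconvex~$\Rightarrow$ rank-one convex''. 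Your packaging instead asks for the converse passage: from a finite-order laminate with negative~$\Phi_C$-average, manufacture a genuine~$C_0^\infty$ test function. That is your step~(b), and while it is doable (it is essentially the construction behind Lemma~\ref{Laminate} iterated), it is strictly more work than the paper needs.

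Second, and more substantively, your appeal to Theorem~\ref{SeparateTheorem} to ``furnish, for every~$C$, a finite-order laminate of barycenter~$0$ with strictly negative average of~$\Phi_C$'' is not what that theorem says. Theorem~\ref{SeparateTheorem} asserts nonnegativity of a \emph{finite}, separately convex, one-homogeneous function; it does not by itself produce laminates, and in particular not laminates \emph{centred at~$0$}. To extract what you want you would have to argue through the separately convex envelope~$\Phi_C^{sc}$: from~$\Phi_C^{sc}\leq\Phi_C$ and~$\Phi_C(1,0,\dots,0)=-1$ the envelope is negative somewhere, so by Theorem~\ref{SeparateTheorem} it cannot be real-valued, hence~$\Phi_C^{sc}=-\infty$ at some point; but you then still need to propagate~$-\infty$ to the origin, and homogeneity alone only gives~$\Phi_C^{sc}(0)=0$ formally. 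This gap disappears if you argue forwards as the paper does: the assumed inequality makes the (quasiconvex, hence rank-one convex) envelope finite everywhere, so Theorem~\ref{SeparateTheorem} applies directly and yields the contradiction with~$\Phi_C(1,0,\dots,0)<0$. I would recommend rewriting the last paragraph in that direction; the rest of your outline then matches the paper essentially verbatim, with the minor cosmetic difference that you work on~$\mathbb{R}^\ell$ (indexed by the operators) rather than on the paper's space~$E$ (indexed by the monomials in~$A$).
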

We note that the differential operators here are not necessarily scalar, i.e., one can prove the same theorem for the case where operators act on vector fields. It is one of the advantages of the general rank one convexity approach. However, to facilitate the notation, we work on the scalar case.

We outline the structure of the paper. We begin with restating inequality~\eqref{MainNonInequality} as an extremal problem described by a certain Bellman function (if inequality~\eqref{MainNonInequality} holds, then the corresponding Bellman function is non-negative).
We also study the properties of our Bellman function (they are gathered in Theorem~\ref{PropertiesBell}), the most important of which is the quasi convexity. All this material constitutes Section~\ref{S2}. It turns out, that quasi convexity leads to a softer, but easier to work with, property of rank one convexity. The proof of this fact is given in Section~\ref{S3}, see Theorem~\ref{RankOne}. So, the Bellman function in question is rank one convex. In Section~\ref{S4}, we prove that rank one convex functions homogeneous of order one are non-negative, which gives us Theorem~\ref{MainTheoremBis}. In fact, it suffices to show a similar principle for separately convex functions on~$\mathbb{R}^d$, which is formalized in Theorem~\ref{SeparateTheorem}. This theorem is purely convex geometric. Finally, we discuss related questions in Section~\ref{S5}. 

We thank Fedor Nazarov and Pavel Zatitskiy for fruitful discussions on the topic, and Alexander Logunov for his help with subharmonic functions.

\section{Bellman function and its properties}\label{S2}
Inequality~\eqref{MainNonInequality} can be rewritten as
\begin{equation}\label{Infimum}
\inf_{\varphi \in C_0^{\infty}([0,1]^d)} \Big(\sum\limits_{j = 2}^{\ell}\|T_j \varphi\|_{L_1(\mathbb{R}^d)} - c\|T_1 \varphi\|_{L_1(\mathbb{R}^d)}\Big) = 0,
\end{equation}
where~$c$ is a sufficiently small positive constant.
\begin{Def}
Suppose that~$\partial^{\alpha}$\textup,~$\alpha \in A$ are all the partial derivatives that are present in the~$T_j$ \textup(thus~$A$ is a subset of~$\Lambda \cap \mathbb{Z}^d$\textup). Consider the Hilbert space~$E$ with an orthonormal basis~$e_{\alpha}$ indexed with the set~$A$. For each function~$\varphi$ and each point~$x$\textup, we have a mapping
\begin{equation*}
[0,1]^d \ni x \mapsto \gengrad[\varphi](x) = \sum\limits_{\alpha \in A} \partial^{\alpha}[\varphi](x) e_{\alpha} \in E.
\end{equation*}  
We call the function $\gengrad[\varphi]$ the generalized gradient of~$\varphi$. 
\end{Def}
The operator $\gengrad[\cdot]$ is an analogue of the usual gradient suitable for our problem.
\begin{Ex}
 Let $T_j= \partial_{x_j}$ for $j=1, \ldots , d$. In this case the generalized gradient turns out to be the usual gradient on the Euclidean space $\mathbb{R}^d$.
\end{Ex}
\begin{Ex}
Let us take the differential operators
 \begin{equation}
 \begin{aligned}
  T_1 [\varphi] = \partial^{(2,0,1)}[\varphi] - \partial^{(0,3,1)}[\varphi],\quad  T_2 [\varphi] = \partial^{(4,0,0)}\varphi,\quad T_3 [\varphi] = \partial^{(0,6,0)}[\varphi], \quad  T_4 [\varphi] = \partial^{(0,0,2)}[\varphi].
  \end{aligned}
 \end{equation}
We can list all the partial derivatives present in the operators\textup{:}  $$A=\{ \partial^{(0,0,2)}, \partial^{(0,6,0)}, \partial^{(4,0,0)}, \partial^{(0,3,1)}, \partial^{(2,0,1)}\}.$$ All the operators $T_j$ are $\Lambda$-homogeneous\textup, where $\Lambda=\{ x\in R^{3} \colon \langle x,\, (3,2,6)\rangle = 12\}$.
In this case the generalized gradient is of the following form\textup{:}
\begin{equation*}
\gengrad [\varphi] = (\partial^{(0,0,2)} [\varphi], \partial^{(0,6,0)}[\varphi], \partial^{(4,0,0)}[\varphi], \partial^{(0,3,1)}[\varphi], \partial^{(2,0,1)}[\varphi])\in\mathbb{R}^5.
\end{equation*}
 \end{Ex}
We also consider the function~$V:E\to \mathbb{R}$ given by the rule
\begin{equation}\label{FunctionV}
V(e) = \Big(\sum\limits_{j = 2}^{\ell}|\tilde{T}_j e| - c|\tilde{T}_1 e|\Big), 
\end{equation}
here~$\tilde{T}_j$ are the linear functionals on~$E$ such that~$\tilde{T}_j(e) = \sum_A c_{\alpha,j}e_{\alpha}$ if~$T_j = \sum_{A}c_{\alpha,j}\partial^{\alpha}$. 
With this portion of abstract linear algebra, we rewrite formula~\eqref{Infimum} as
\begin{equation*}
\inf_{\varphi \in C_0^{\infty}([0,1]^d)} \int\limits_{[0,1]^d} V(\gengrad[\varphi](x))\,dx = 0.
\end{equation*}
The main idea is to consider a perturbation of this extremal problem, i.e., the function~$\Bell:E \to \mathbb{R}$ given by the formula
\begin{equation}\label{Bellman}
\Bell(e) = \inf_{\varphi \in C_0^{\infty}([0,1]^d)} \int\limits_{[0,1]^d} V(e+\gengrad[\varphi](x))\,dx.
\end{equation}
\begin{Th}\label{PropertiesBell}
Suppose that inequality~\textup{\eqref{Infimum}} holds true. Then\textup, the function~$\Bell$ possesses the properties listed below.
\textup{
\begin{enumerate}
\item\emph{It satisfies the inequalities~$-\|e\| \lesssim \Bell(e) \lesssim \|e\|$ and~$\Bell \leq V$.}
\item\emph{It is positively one homogeneous\textup, i.e.~$\Bell (\lambda e) = |\lambda|\Bell(e)$.}
\item\emph{It is a Lipschitz function.}
\item\emph{It is a generalized quasi convex function\textup, i.e. for any~$\varphi \in C_0^{\infty}([0,1]^d)$ and any $e\in E$  the inequality}
\begin{equation}\label{BellmanInequality}
\Bell(e) \leq \int\limits_{[0,1]^d} \Bell(e + \gengrad [\varphi](x))\,dx
\end{equation}
\emph{holds true}.
\end{enumerate}
}
\end{Th}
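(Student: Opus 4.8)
The plan is to verify the four properties in the order they are stated, since each feeds into the next. For property~1, the upper bound $\Bell \leq V$ is immediate by taking $\varphi \equiv 0$ in the infimum~\eqref{Bellman}. The bound $\Bell(e) \lesssim \|e\|$ then follows from the trivial estimate $|V(e)| \lesssim \|e\|$ (each $\tilde T_j$ is a bounded linear functional on the finite-dimensional space $E$). For the lower bound $-\|e\| \lesssim \Bell(e)$, the key observation is that $V(e + \gengrad[\varphi](x)) \geq -c|\tilde T_1(e + \gengrad[\varphi](x))| \geq -c|\tilde T_1 e| - c|\tilde T_1 \gengrad[\varphi](x)|$; integrating and using that $\int T_1\varphi$ is controlled — actually one should instead argue as follows: since inequality~\eqref{Infimum} holds, $\int_{[0,1]^d} V(\gengrad[\psi])\,dx \geq 0$ for all admissible $\psi$, and a scaling/periodization argument extends this to $\int V(e + \gengrad[\varphi])\,dx \geq \int V(e)\,dx - (\text{correction})$; the cleanest route is to note $\sum_{j\ge 2}|\tilde T_j(\cdot)| \geq c|\tilde T_1(\cdot)|$ pointwise need not hold, so one uses instead the variational inequality~\eqref{Infimum} directly, observing that for any $e$ the constant function $e = \gengrad[\text{(affine-like function)}]$ is not compactly supported, hence one must cut off — this is exactly where one gets $\Bell(e) \geq -c|\tilde T_1 e| \gtrsim -\|e\|$ after controlling boundary terms. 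I expect this lower bound to require the most care.

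For property~2 (positive one-homogeneity), the point is that $V$ is itself positively one-homogeneous, $V(\lambda e) = |\lambda| V(e)$, because it is built from absolute values of linear functionals. Given $\lambda \neq 0$, the substitution $\varphi \mapsto \lambda \varphi$ is a bijection of $C_0^\infty([0,1]^d)$ onto itself, and $\gengrad[\lambda\varphi] = \lambda\gengrad[\varphi]$, so
\begin{equation*}
\Bell(\lambda e) = \inf_\varphi \int V(\lambda e + \gengrad[\lambda\varphi])\,dx = \inf_\varphi \int |\lambda|\, V(e + \gengrad[\varphi])\,dx = |\lambda|\,\Bell(e),
\end{equation*}
and the case $\lambda = 0$ follows since $\Bell(0) = 0$ (the sandwich bound from property~1 forces this, once one also knows $\Bell(0)\le V(0)=0$ and $\Bell(0)\ge 0$ from~\eqref{Infimum}).

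Property~3 (Lipschitz continuity) should follow from property~1 together with a translation-type argument: for $e, e' \in E$, and any near-optimal $\varphi$ for $\Bell(e')$, one has $\Bell(e) \leq \int V(e + \gengrad[\varphi])\,dx \leq \int V(e' + \gengrad[\varphi])\,dx + \int |V(e+\gengrad[\varphi]) - V(e'+\gengrad[\varphi])|\,dx \leq \Bell(e') + \eps + \|e - e'\|\cdot\mathrm{Lip}(V)$, using that $V$ is globally Lipschitz with constant $\mathrm{Lip}(V) = \sum_{j\ge2}\|\tilde T_j\| + c\|\tilde T_1\|$ (sum of norms of linear functionals). By symmetry $|\Bell(e) - \Bell(e')| \lesssim \|e - e'\|$.

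Property~4 (generalized quasiconvexity) is the structural heart and I expect it to be the main obstacle. The idea is a self-improvement / gluing argument: given $\varphi \in C_0^\infty([0,1]^d)$ and $e \in E$, we want to show $\Bell(e) \le \int \Bell(e + \gengrad[\varphi](x))\,dx$. One approximates the integral on the right by a Riemann-type sum over a fine partition of $[0,1]^d$ into small cubes $Q_k$; on each $Q_k$ pick a near-optimal test function $\psi_k$ for $\Bell(e + \gengrad[\varphi](x_k))$, rescale it to live on $Q_k$, and paste all of them together with $\varphi$ to build a competitor $\Phi$ for $\Bell(e)$. The difficulty is twofold: (i) the pasted function must remain $C_0^\infty$, so one needs a cutoff layer near $\partial Q_k$ whose contribution to $\int V(e + \gengrad[\Phi])$ is negligible — this uses the one-homogeneity and Lipschitz bound on $V$ to control the error in terms of (boundary measure)$\times$(gradient size), which is made small by choosing the partition fine and the cutoff region thin; (ii) on $Q_k$ one has $\gengrad[\Phi] \approx \gengrad[\varphi](x_k) + \gengrad[\psi_k]$ but $\gengrad[\varphi]$ is not constant on $Q_k$, so one estimates the oscillation of $\gengrad[\varphi]$ over $Q_k$ (small, since $\varphi$ is smooth) and absorbs it via the Lipschitz continuity of $V$. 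Summing over $k$ and passing to the limit yields the desired inequality; the homogeneity of $\Bell$ (property~2) is what allows the rescaling of the $\psi_k$ to the cubes $Q_k$ without distorting the averages.
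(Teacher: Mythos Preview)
Your arguments for properties~2 and~3 are correct and coincide with the paper's. The upper bound in property~1 is also fine. Two points need repair.

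\medskip
\textbf{Property 1, lower bound.} Your sketch wanders and ends with the claim $\Bell(e)\geq -c|\tilde T_1 e|$, which is stronger than required and not established by anything you wrote (no cutoff of an ``affine-like'' function is involved). The clean argument, which is what the paper does, is just triangle inequality plus~\eqref{Infimum}: for any admissible $\varphi$,
\[
\int V(e+\gengrad\varphi)
\;\geq\;\int\sum_{j\geq 2}\big|\tilde T_j(e+\gengrad\varphi)\big| - c\int\big|\tilde T_1\gengrad\varphi\big| - c|\tilde T_1 e|
\;\geq\; -\sum_{j\geq 2}|\tilde T_j e|-c|\tilde T_1 e|,
\]
where one uses~\eqref{Infimum} in the form $c\int|T_1\varphi|\leq\sum_{j\geq 2}\int|T_j\varphi|$ and then the reverse triangle inequality $|\tilde T_j(e+\gengrad\varphi)|-|\tilde T_j\gengrad\varphi|\geq -|\tilde T_j e|$. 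No boundary terms enter.

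\medskip
\textbf{Property 4, rescaling.} This is the genuine gap. You propose partitioning into \emph{cubes} $Q_k$ and invoking the positive one-homogeneity of $\Bell$ to rescale each competitor $\psi_k$ onto $Q_k$. That works in the isotropic case but fails here: under the isotropic dilation $\psi\mapsto h^{m}\psi((\,\cdot\,-x_Q)/h)$ the component $\partial^{\alpha}$ of $\gengrad$ picks up a factor $h^{m-|\alpha|}$, and since the multi-indices $\alpha\in A$ in general have \emph{different} total degrees $|\alpha|$, no single exponent $m$ keeps $\gengrad$ invariant. The one-homogeneity of $\Bell$ is homogeneity under scalar multiplication in $E$ and cannot fix a component-by-component distortion. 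The paper's remedy is to exploit the pattern of homogeneity $\Lambda$: choose $\gamma\in\mathbb{N}^d$ and $k\in\mathbb{N}$ with $\langle\alpha,\gamma\rangle=k$ for every $\alpha\in A$, partition $[0,1]^d$ into anisotropic \emph{parallelepipeds} $Q_y=y+\prod_{j}[0,\lambda^{-\gamma_j}]$, and rescale via $\psi_{y,\lambda}(x)=\lambda^{-k}\psi_y\bigl((x-y)_\lambda\bigr)$ with $(z)_\lambda=(\lambda^{\gamma_1}z_1,\dots,\lambda^{\gamma_d}z_d)$. Then $\partial^{\alpha}\psi_{y,\lambda}(x)=\partial^{\alpha}\psi_y\bigl((x-y)_\lambda\bigr)$ for all $\alpha\in A$, so $\gengrad$ transfers without distortion and the averages match exactly. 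A secondary point: no cutoff layer near $\partial Q_y$ is needed, since each $\psi_y$ already lies in $C_0^\infty([0,1]^d)$, hence its rescaled copy has compact support inside $Q_y$, and $\varphi+\sum_y\psi_{y,\lambda}\in C_0^\infty([0,1]^d)$ automatically.
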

\begin{proof}
1) We get the upper estimates on the function $\Bell$ by plugging $\varphi\equiv 0$ in the formula for it: 
\begin{equation*}
 \Bell(e) \leq \int\limits_{[0,1]^d} V(e+\gengrad[\varphi])= V(e) \lesssim \|e\|. 
\end{equation*}
We obtain the lower bounds on the function $\Bell$ from inequality {\eqref{Infimum}} and the triangle inequality: 
\begin{equation*}
\begin{split}
&\int\limits_{[0,1]^d} \left(\sum\limits_{j = 2}^{\ell}\left|\tilde{T}_j \left( e+ \gengrad[\varphi]\right)\right| - c\left|\tilde{T}_1\left( e+\gengrad[\varphi]\right)\right|\right)
\geq \int\limits_{[0,1]^d}\left( \sum\limits_{j = 2}^{\ell}\left|\tilde{T}_j\left( e+ \gengrad[\varphi]\right)\right| - c\left|\tilde{T}_1 \left(\gengrad[\varphi]\right)\right| - c |\tilde{T}_1 e|\right)\geq
\\&\int\limits_{[0,1]^d} \left(\sum\limits_{j = 2}^{\ell}|\tilde{T}_j (e+\gengrad[\varphi])| -  \sum\limits_{j = 2}^{\ell}|\tilde{T}_j (\gengrad[\varphi])| - c |\tilde{T}_1 e|\right)
=\int\limits_{[0,1]^d}\left( \sum\limits_{j = 2}^{\ell}\left(|\tilde{T}_j \left(e+\gengrad[\varphi]\right)| -  |\tilde{T}_j (\gengrad[\varphi])|\right) - c |\tilde{T}_1 e|\right)\geq
\\&\hspace*{12cm}  - \sum\limits_{j = 2}^{\ell}|\tilde{T}_j e| - c |\tilde{T}_1 e|,
\end{split}
\end{equation*}
where~$\varphi \in C_0^{\infty}([0,1]^d)$~is an arbitrary function.
We take infimum of the above inequality over all admissible~$\varphi$:
\begin{equation*}
 -\|e\|\lesssim   - \sum\limits_{j = 2}^{\ell}|\tilde{T}_j e| - c |\tilde{T}_1 e| \leq \Bell(e).
\end{equation*}
 2) Since $V$ is a positively one homogeneous function, the following equality holds for every $\lambda\neq 0$:
\begin{equation*}
\Bell(\lambda e) = \inf_{\varphi \in C_0^{\infty}([0,1]^d)} \int\limits_{[0,1]^d} V\left(\lambda e+\gengrad[\varphi]\right)
= \inf_{\varphi \in C_0^{\infty}([0,1]^d)} \int\limits_{[0,1]^d}|\lambda| V\left(e+\gengrad[\lambda^{-1}\varphi]\right).
\end{equation*}
We know that $\lambda^{-1}C_0^{\infty}([0,1]^d)=C_0^{\infty}([0,1]^d)$ for every $\lambda\neq 0$, therefore
\begin{equation*}
\Bell(\lambda e)=\inf_{\varphi \in C_0^{\infty}([0,1]^d)} \int\limits_{[0,1]^d}|\lambda|V\left(e+\gengrad[\lambda^{-1}\varphi]\right)=|\lambda| \inf_{\varphi \in C_0^{\infty}([0,1]^d)} \int\limits_{[0,1]^d}V\left(e+\gengrad[\varphi]\right)=|\lambda| \Bell(e).
\end{equation*}
3) In order to get the Lipschitz continuity of $\Bell$, we rewrite the formula for it:
\begin{equation*}
\forall e\in E \qquad \Bell(e)= \inf_{\varphi \in C_0^{\infty}([0,1]^d)} V_{\varphi}(e), 
\end{equation*}
where
\begin{equation*}
  V_{\varphi}(e)= \int\limits_{[0,1]^d} V(e+\gengrad[\varphi](x))dx.
\end{equation*}
It follows from the Lipschitz continuity of $V$ that every function $V_{\varphi}$ is a Lipschitz function with the Lipschitz constant bounded by $L$, where $L$ is the Lipschitz constant of the function $V$. For every two points $v_1,v_2 \in E $, we can find a sequence of functions $V_{\varphi_n}$ such that
$\Bell(v_j)=\inf_{n\in\mathbb{N}} V_{\varphi_n}(v_j)$ for $j\in\{1,2\}$. We define $$f_k (e) = \min_{n=1,2,\ldots, k} V_{\varphi_n}(e).$$ For every $k\in\mathbb{N}$ the function $f_k$ is the Lipschitz function with the Lipschitz constant bounded by $L$. Hence
\begin{equation*}
|\Bell(v_1) - \Bell(v_2)|=\lim_{k\rightarrow \infty} |f_k(v_1) - f_k(v_2)|\leq L \|v_1-v_2\|.
\end{equation*}
4) Before we prove the generalized quasi convexity of this function, we need to introduce some notation. 
We know that all $\alpha\in A$ have common pattern of homogeneity $\Lambda$, thus we can find a vector $\gamma \in \mathbb{N}^d$ and a number~$k\in\mathbb{N}$ such that~$\langle \alpha, \gamma \rangle = k$ for every $\alpha\in A$.

For every $\lambda\in \mathbb{R}$ and~$x\in \mathbb{R}^d$ we denote 
\begin{equation*}
 x_{\lambda}= (\lambda^{\gamma_1} x_1, \lambda^{\gamma_2} x_2, \ldots, \lambda^{\gamma_d} x_d).
\end{equation*}
For every $\lambda\in \mathbb{N}$ we define the partition of the unit cube~$[0,1]^d$ into small parallelepipeds:
\begin{equation*}
Q_y= y+ \Pi_{j=1}^d [0,\lambda^{-\gamma_j}] \quad \hbox{for every}\; y\in Y,\quad \hbox{where} 
\end{equation*}
\begin{equation*}
Y=\left\{y\in [0,1]^d : y=\left(\frac{k_1}{\lambda^{\gamma_1}},\frac{k_2}{\lambda^{\gamma_2}},\ldots, \frac{k_d}{\lambda^{\gamma_d}}\right) \quad \mbox{for }k_j\in\mathbb{N}\cup\{0\}\mbox{ and } k_j<\lambda^{\gamma_j}\right\}.
\end{equation*}
Here~$Y$ is the set of ``leftmost lowest'' vertices of the parallelepipeds~$Q_y$.
The parallelepipeds $Q_y$ are disjoint up to sets of measure zero and $\bigcup_{y\in Y} Q_y = [0,1]^d$.
Let us fix $\varphi\in C_0^{\infty}([0,1]^d)$. Since $\gengrad[\varphi]$ is a uniformly continuous function on $[0,1]^d$ and the diameter of the parallelepipeds $Q_y$ tends to zero uniformly with the growth of $\lambda$, we can choose $\lambda$ sufficiently large to obtain 
\begin{equation}\label{uniformcontpsi}
 \forall\; y\in Y\;\;\forall z,v\in Q_y \qquad |\gengrad[\varphi](z) -\gengrad[\varphi](v)|\leq \frac{\varepsilon}{L},
\end{equation}
where $L$ is the Lipschitz constant of the function $V$. 
Let $\{ \psi_{y}\}_{y\in Y}$ be a family of functions in $C_0^{\infty}([0,1]^d)$. For these functions, we use the following rescaling: 
$$\psi_{y,\lambda}(x)= \lambda^{-k}\psi_{y}((x-y)_{\lambda}).$$ 
Let us observe that the rescaling $(x-y)_{\lambda}$ transforms the cube $[0,1]^d$ into $Q_y$, thus $\operatorname{supp}\psi_{y,\lambda}(x)\subset Q_y $.
Moreover,  we know that 
\begin{equation*}
 \partial^{\alpha}[ \psi_{y,\lambda}](x) = \lambda^{-k} \lambda^{(\sum_{j=1}^d \alpha_j \gamma_j)} \partial^{\alpha} [\psi_y] \left( (x-y)_{\lambda}\right)= \partial^{\alpha} [\psi_{y}] \left( (x-y)_{\lambda}\right)
\end{equation*}
for every $\alpha\in A$.
By~\eqref{Bellman}, we have 
\begin{equation*}
\begin{split}
 \Bell(e)\leq  \int\limits_{[0,1]^d} V\Big( e + \sum_{y\in Y} \gengrad[\psi_{y,\lambda}](x) + \gengrad[\varphi](x)\Big)dx = \sum_{y\in Y} \int\limits_{Q_y} V\Big(e + \gengrad[\psi_{y,\lambda}](x) + \gengrad[\varphi](x)\Big)dx.
 \end{split}
 \end{equation*}
We assumed that \eqref{uniformcontpsi} holds, therefore, for arbitrary $v_y\in Q_y$ we have the following estimate:
\begin{equation*}
\begin{split}
 \int\limits_{Q_y} V\left(e + \gengrad[\psi_{y,\lambda}](x) + \gengrad[\varphi](x)\right)\,dx &\leq \int\limits_{Q_y} V\left(e + \gengrad[\psi_{y,\lambda}](x) + \gengrad[\varphi](v_y)\right)dx + \varepsilon |Q_y|
 \\&= \int\limits_{Q_y} V\left(e + \gengrad[\psi_{y}]((x-y)_{\lambda}) + \gengrad[\varphi](v_y)\right)dx + \varepsilon |Q_y|.
\end{split}
 \end{equation*}
Since $\lambda^{-\left(\sum_{j=1}^{d} \gamma_j\right)}= |Q_y|$,  we have 
\begin{equation*}
  \int\limits_{Q_y} V\left(e + \gengrad[\psi_{y}]((x-y)_{\lambda}) + \gengrad[\varphi](v_y)\right)\,dx = |Q_y|\int\limits_{[0,1]^d}  V\left(e + \gengrad[\psi_{y}](z) + \gengrad[\varphi](v_y)\right)dz
\end{equation*}
for $z =(x-y)_{\lambda}$.
Now for every $y\in Y, v_y \in Q_y$ we can choose $\psi_{y}$ such that 
\begin{equation*}
 \int\limits_{[0,1]^d} V\big(e + \gengrad[\psi_{y}](z) + \gengrad[\varphi](v_y)\big)\,dz\leq \Bell(e+\gengrad[\varphi](v_y))  +\varepsilon
\end{equation*}
(this choice depends on~$v_y$, however, we treat~$v_y$ as of a fixed parameter).
We obtain  
\begin{equation*}
 \Bell(e)\leq  \sum_{y\in Y}|Q_y|\Bell(e+\gengrad[\varphi](v_y)) + 2 \varepsilon
\end{equation*}
from the above inequalities.
We take mean integrals of this inequality over each cube $Q_y$ with respect to~$v_y$, which gives us 
\begin{equation*}
  \Bell(e)\leq \sum_{y\in Y} \int_{Q_y}\Bell(e+\gengrad[\varphi](v_y)) dv_y  +2 \varepsilon= \int_{[0,1]^d} \Bell(e+ \gengrad[\varphi](x))dx + 2\varepsilon.
\end{equation*}
Since $\varepsilon$ was an arbitrary positive number, we have proved the generalized quasi convexity of $\Bell$.
\end{proof}
The proof of the fourth point seems very similar to the standard \emph{Bellman induction step} (see~\cite{NTV,Osekowski,SZ,Volberg} or any other paper on Bellman function method in probability or harmonic analysis); moreover, the function~$\Bell$ itself is, in a sense, a Bellman function and inequality~\eqref{BellmanInequality} is a Bellman inequality. We suspect that this ``similarity'' should be more well studied.

\section{Rank one convexity}\label{S3}
Inequality~\eqref{BellmanInequality} looks like a convexity inequality. Sometimes it is really the case.
\begin{Def}\label{RankOneTensor}
We call a vector~$e_x \in E$ a generalized rank one vector if it is of the form
\begin{equation*}
\sum\limits_{\alpha \in A}  i^{|\alpha|+|\alpha_0|}x^{\alpha}e_{\alpha},\quad x \in \mathbb{R}^d,\; \alpha_0\in A.
\end{equation*}
\end{Def}
\begin{Rem}
 In Theorem \textup{\ref{MainTheoremBis}}\textup, we only consider the case where every $\alpha\in A$ has the same parity as the other elements of $A$. Therefore\textup,~$i^{|\alpha|+|\alpha_0|}\in \mathbb{R}$ for every $\alpha_0, \alpha \in A$a . Hence the coefficients of the generalized rank one vector are real.
\end{Rem}
\begin{Th}\label{RankOne}
The function~$\Bell$ is a generalized rank one convex function\textup, i.e. it is convex in the directions of generalized rank one vectors.
\end{Th}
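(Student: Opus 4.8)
The plan is to feed the generalized quasi convexity inequality~\eqref{BellmanInequality} a family of test functions whose generalized gradients oscillate, up to a uniformly small error, along a prescribed generalized rank one vector, and then to promote the resulting averaged inequality to honest convexity by a one--dimensional maximum principle. The heart of the matter is the construction of these test functions: one has to see that it is an \emph{anisotropic} dilation --- not an isotropic high frequency --- that simultaneously keeps the leading term of the generalized gradient exactly proportional to the given rank one vector (this is the role played by the factors $x^\alpha$ and $i^{|\alpha|+|\alpha_0|}$ in Definition~\ref{RankOneTensor}) and forces the boundary corrections coming from a cut--off down to lower order.

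Fix $e\in E$ and a generalized rank one vector $v=e_x=\sum_{\alpha\in A}i^{|\alpha|+|\alpha_0|}x^\alpha e_\alpha$; we may assume $v\ne 0$, hence $x\ne 0$. Recall from the proof of Theorem~\ref{PropertiesBell} that there are $\gamma\in\mathbb{N}^d$ and $k\in\mathbb{N}$ with $\langle\alpha,\gamma\rangle=k$ for all $\alpha\in A$; write $y_\lambda=(\lambda^{\gamma_1}y_1,\dots,\lambda^{\gamma_d}y_d)$, and let $p\in\{0,1\}$ be the common parity of the $|\alpha|$, $\alpha\in A$ (well defined by the hypothesis of Theorem~\ref{MainTheoremBis}). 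For $s\ge 0$, a large real $\lambda$, and a cut--off $\eta\in C_0^\infty((0,1)^d)$ with $0\le\eta\le 1$ and $\eta\equiv 1$ on $[\delta,1-\delta]^d$, take
\[
\varphi_\lambda(y)=s\,\eta(y)\,\lambda^{-k}\cos\langle x,y_\lambda\rangle\in C_0^\infty([0,1]^d).
\]
By the chain rule $\partial^\mu_y\big[\lambda^{-k}\cos\langle x,y_\lambda\rangle\big]=\lambda^{\langle\mu,\gamma\rangle-k}x^\mu\cos^{(|\mu|)}\langle x,y_\lambda\rangle$; for $\mu=\alpha\in A$ this power of $\lambda$ equals $1$, whereas in every Leibniz term of $\partial^\alpha\varphi_\lambda$ involving $\partial^\beta\eta$ with $\beta\ne 0$ it equals $\lambda^{-\langle\beta,\gamma\rangle}$ with $\langle\beta,\gamma\rangle\ge 1$ because $\gamma\in\mathbb{N}^d$. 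Since $|\alpha|-p$ is even we have $\cos^{(|\alpha|)}=i^{|\alpha|-p}c$ with $c\in\{\cos,-\sin\}$ a fixed real trigonometric function and $i^{|\alpha|-p}\in\{\pm1\}$ (this is where the parity assumption is used, cf.\ the Remark after Definition~\ref{RankOneTensor}); summing over $\alpha\in A$ and using $e_x=i^{|\alpha_0|}\sum_{\alpha}i^{|\alpha|}x^\alpha e_\alpha$ one gets
\[
\gengrad[\varphi_\lambda](y)=\sigma_0\,s\,\eta(y)\,c\big(\langle x,y_\lambda\rangle\big)\,e_x+R_\lambda(y),\qquad\|R_\lambda\|_{\infty}\le C(x,s,\eta,A)\,\lambda^{-1},
\]
for a fixed sign $\sigma_0\in\{\pm1\}$.

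Plugging $\varphi_\lambda$ into~\eqref{BellmanInequality} and using that $\Bell$ is Lipschitz and satisfies $|\Bell(w)|\le L\|w\|$ (Theorem~\ref{PropertiesBell}), one obtains
\[
\Bell(e)\le\int_{[0,1]^d}\Bell\big(e+\sigma_0 s\,\eta(y)\,c(\langle x,y_\lambda\rangle)\,e_x\big)\,dy+L\,C(x,s,\eta,A)\,\lambda^{-1}.
\]
As $x\ne 0$, some coordinate of the frequency vector $(x_j\lambda^{\gamma_j})_j$ tends to infinity with $\lambda$, so the integrand --- a fixed continuous function of $y$ and of $\langle x,y_\lambda\rangle$, $2\pi$--periodic in the latter slot --- equidistributes (Riemann--Lebesgue lemma). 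Letting first $\lambda\to\infty$ and then $\delta\to 0$, and noting that on $\{\eta<1\}$, of measure $O(\delta)$, the integrand stays bounded by $L(\|e\|+s\|e_x\|)$, we arrive at
\[
\Bell(e)\le\frac{1}{2\pi}\int_{0}^{2\pi}\Bell\big(e+s\cos\theta\,e_x\big)\,d\theta,
\]
valid for every $e\in E$, every $s\ge 0$, and every generalized rank one vector $e_x$.

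Finally, for fixed $e$ and $e_x$ put $g(t):=\Bell(e+te_x)$; this is continuous since $\Bell$ is Lipschitz, and it suffices to prove that $g$ is convex on $\mathbb{R}$. Applying the last inequality with $e$ replaced by $e+t_0e_x$ gives $g(t_0)\le\int g(t_0+u)\,d\nu_s(u)$ for all $t_0\in\mathbb{R}$, $s>0$, where $\nu_s$ is the image of the uniform probability measure on $[0,2\pi]$ under $\theta\mapsto s\cos\theta$ --- the arcsine law on $[-s,s]$, a symmetric probability measure with barycenter $0$ and full support $[-s,s]$. If $g$ were not convex on some segment $[a,b]$, let $L_0$ be the affine function agreeing with $g$ at $a$ and $b$; then $h:=g-L_0$ is continuous, satisfies $h\le h*\nu_s$ (because $L_0*\nu_s=L_0$), $h(a)=h(b)=0$, and $M:=\max_{[a,b]}h>0$, attained at some $c^{\ast}\in(a,b)$. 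For $s<\operatorname{dist}(c^{\ast},\{a,b\})$ the chain $h(c^{\ast})\le\int h(c^{\ast}+u)\,d\nu_s(u)\le\max_{[c^{\ast}-s,\,c^{\ast}+s]}h\le M=h(c^{\ast})$ forces $h\equiv M$ on $[c^{\ast}-s,c^{\ast}+s]$; letting $s$ increase to $\operatorname{dist}(c^{\ast},\{a,b\})$ makes $h\equiv M$ on an interval reaching $a$ or $b$, contradicting continuity of $h$ and $h(a)=h(b)=0<M$. Hence $g$ is convex, which is the assertion of the theorem. In this last step everything is soft; the only routine points in the whole argument are the sign/phase bookkeeping in the construction and the justification of the equidistribution limit.
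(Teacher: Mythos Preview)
Your proof is correct and follows the same two--stage architecture as the paper: first manufacture an oscillating test function whose generalized gradient is (up to a small error and on a set of nearly full measure) proportional to $c(\langle x,y_\lambda\rangle)\,e_x$, feed it into the quasi convexity inequality~\eqref{BellmanInequality}, and obtain the sub--mean--value inequality $\Bell(e)\le\frac{1}{2\pi}\int_0^{2\pi}\Bell(e+s\cos\theta\,e_x)\,d\theta$; second, upgrade this to honest convexity of $t\mapsto\Bell(e+te_x)$. The test function you write down is exactly the one in the paper's Lemma~\ref{Laminate}. The differences are in execution. For the passage to the limit, the paper isolates an explicit set $B$ of measure $\ge 1-\delta$ on which the generalized gradient is \emph{exactly} equimeasurable with $\cos(2\pi t)\,e_x$, whereas you let $\lambda\to\infty$ and invoke Weyl--type equidistribution of $\langle x,y_\lambda\rangle$; both are standard and equivalent here. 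For the convexity step, the paper's Lemma~\ref{ConvexityWithCosine} tests against smooth nonnegative $\varphi$, Taylor--expands, and reads off $v''\ge 0$ as a distribution; you instead run a one--dimensional maximum principle against the arcsine law. Your route avoids distributions at the cost of a short propagation argument, while the paper's is a one--line computation once distribution theory is granted; neither has any real advantage over the other.
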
 
To prove the theorem, we need two auxiliary lemmas.

\begin{Le}\label{Laminate}
For every~$x\in\mathbb{R}^d$ and every~$\eps,\delta >0$\textup{,} there exists a function~$l_{x,\eps,\delta} \in C_0^{\infty}([0,1]^d)$ and a set $B\subset [0,1]^d$ such that the following holds.
\textup{\begin{enumerate}
 \item $\|\gengrad[l_{x ,\eps,\delta}]\| \leq \|e_x\| + \eps$.
 \item $|B|\geq 1-\delta.$
 \item \emph{The function~$\gengrad [l_{x ,\eps,\delta}]\big|_{B}$ with respect to the measure $\mu=|B|^{-1}dx$ is equimeasurable with the function $\cos(2\pi t) e_x$\textup, $t\in[0,1]$\textup, i.e. }
 \begin{equation*}
 \mu\left( \{\gengrad[l_{x ,\eps,\delta}]\in W\}\right) = \bigg|\bigg\{ t\in[0,1] : \cos(2\pi t)e_x  \in W\bigg\}\bigg|
 \end{equation*}
 \emph{for every Borel set $W$ in $E$.} 
 \end{enumerate}
 }
 \end{Le}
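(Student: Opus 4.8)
The plan is to build $l_{x,\eps,\delta}$ as a smooth truncation of a single anisotropically rescaled plane wave. If $e_x=0$ there is nothing to do: take $l_{x,\eps,\delta}\equiv 0$ and $B=[0,1]^d$. Otherwise $x^\alpha\ne 0$ for some $\alpha\in A$, so after relabelling the coordinates, and replacing $x$ by $-x$ (which, under the parity hypothesis, only multiplies $e_x$ by the global sign $(-1)^{|\alpha_0|}$ and hence does not affect equimeasurability with $\cos(2\pi t)e_x$), we may assume $x_1>0$. Let $\gamma\in\mathbb{N}^d$ and $k\in\mathbb{N}$ be the exponents produced in the proof of Theorem~\ref{PropertiesBell}, so that $\langle\alpha,\gamma\rangle=k$ for every $\alpha\in A$. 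For a large integer $m$, to be fixed at the very end, I set $M:=(2\pi m/x_1)^{1/\gamma_1}$ and $x':=(M^{\gamma_1}x_1,\dots,M^{\gamma_d}x_d)$, so that $x'_1=2\pi m$, and I start from the plane wave $g(y):=M^{-k}\cos\langle x',y\rangle$.

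The first point to check is that the generalized gradient of $g$ points everywhere along $e_x$. Since $\langle\alpha,\gamma\rangle=k$ one has $(x')^\alpha=M^{k}x^\alpha$, whence
\begin{equation*}
\partial^\alpha[g](y)=M^{-k}\operatorname{Re}\big(i^{|\alpha|}(x')^\alpha e^{i\langle x',y\rangle}\big)=x^\alpha\operatorname{Re}\big(i^{|\alpha|}e^{i\langle x',y\rangle}\big).
\end{equation*}
Writing $i^{|\alpha|}=i^{|\alpha|+|\alpha_0|}\,i^{-|\alpha_0|}$ and using that $i^{|\alpha|+|\alpha_0|}$ is a real sign (this is where the parity hypothesis enters), the right-hand side equals $(e_x)_\alpha\,\rho(y)$, where $\rho(y):=\operatorname{Re}\big(i^{-|\alpha_0|}e^{i\langle x',y\rangle}\big)$ is a cosine with an $\alpha$-independent phase. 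Hence $\gengrad[g](y)=\rho(y)e_x$ and $\|\gengrad[g]\|\le\|e_x\|$ pointwise. What is left is to restrict $g$ to $[0,1]^d$ while keeping control both of the distribution of $\rho$ and of the size of $\gengrad[l_{x,\eps,\delta}]$.

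For the truncation I fix a small $\delta'>0$ with $(1-2\delta')^d\ge 1-\delta$, and for each large $m$ pick an integer $j_m$ with $j_m/m\in[1-2\delta',\,1-\delta']$; put $\eta_m:=\tfrac12(1-j_m/m)\in[\delta'/2,\delta']$ and
\begin{equation*}
B:=[\eta_m,\,1-\eta_m]\times[\delta',\,1-\delta']^{d-1},\qquad\text{so that}\qquad |B|=\tfrac{j_m}{m}(1-2\delta')^{d-1}\ge(1-2\delta')^d\ge 1-\delta.
\end{equation*}
Let $\chi\in C_0^\infty([0,1]^d)$ be a product cutoff with $0\le\chi\le 1$ and $\chi\equiv 1$ on $B$; since the transition widths $\eta_m$ and $\delta'$ stay bounded below by $\delta'/2$, one may take $\chi$ with all derivatives bounded \emph{independently of} $m$. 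Set $l_{x,\eps,\delta}:=\chi g$. Then $l_{x,\eps,\delta}\in C_0^\infty([0,1]^d)$, and on $B$ it coincides with $g$, so $\gengrad[l_{x,\eps,\delta}]\big|_B=\rho(\cdot)e_x$. Property~3 now follows by conditioning on $(y_2,\dots,y_d)$: then $\langle x',y\rangle=2\pi m y_1+\mathrm{const}$ runs over exactly $j_m$ full periods as $y_1$ ranges over $[\eta_m,1-\eta_m]$, so $\langle x',y\rangle\bmod 2\pi$ is \emph{exactly} uniform and $\rho\big|_B$ is exactly equimeasurable with $\cos(2\pi t)$, $t\in[0,1]$; for a Borel set $W\subset E$ one reads off the required identity by passing to $\{s\in\mathbb{R}:se_x\in W\}$. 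Property~1 follows from the Leibniz rule: $\partial^\alpha[\chi g]$ equals $\chi\,\partial^\alpha[g]$, of modulus $\le|x^\alpha|$, plus terms each carrying a factor $M^{-\langle\gamma,\beta\rangle}\le M^{-1}$ (here one uses $\gamma\in\mathbb{N}^d$), so $\|\gengrad[l_{x,\eps,\delta}]\|\le\|e_x\|+O(1/M)$; choosing $m$, hence $M$, large enough to make all of this hold simultaneously finishes the proof.

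The step I expect to be the genuine obstacle is extracting \emph{exact} equimeasurability with $\cos(2\pi t)e_x$ rather than merely an approximate one. A smooth compactly supported $l$ cannot literally be a plane wave, so one must at the same time tune a single frequency $x'_1$ to an exact integer multiple of $2\pi$, place the ``good'' box $B$ so that it covers a whole number of periods in that variable, and keep the cutoff's derivatives bounded uniformly in the rescaling parameter $M$---otherwise the correction terms would spoil the bound in property~1. The fact that $\gengrad[g]$ is automatically a scalar multiple of $e_x$, which rests on $\langle\alpha,\gamma\rangle\equiv k$ and on the parity hypothesis making $i^{|\alpha|+|\alpha_0|}$ real, is exactly what frees us from having to adjust the profile any further.
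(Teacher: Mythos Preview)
Your proof is correct and follows essentially the same strategy as the paper: define $l$ as a smooth cutoff times an anisotropically rescaled cosine $M^{-k}\cos\langle x',\cdot\rangle$, use $\langle\alpha,\gamma\rangle=k$ together with the parity hypothesis to identify $\gengrad[g]$ as a scalar multiple of $e_x$, and control the Leibniz remainder by the factor $M^{-\langle\beta,\gamma\rangle}\le M^{-1}$. The only noteworthy difference is in how exact equimeasurability is obtained: the paper takes $B$ to be a union of small product parallelepipeds, each a full period of the wave, whereas you tune a single frequency $x'_1=2\pi m$ to an integer multiple of $2\pi$, take $B$ as one box whose first side covers exactly $j_m$ periods, and read off the distribution by conditioning on $(y_2,\dots,y_d)$; this is a slightly slicker variant of the same idea.
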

\begin{proof}
For a given $x\in\mathbb{R}^d$ we take the same $\gamma$ and $k$ as in the proof of the fourth point of Theorem \ref{PropertiesBell}. We consider the function 
\begin{equation*}
l_{ x,\eps,\delta} (\xi)=t^{-k}\cos(\sum_{j=1}^d t^{\gamma_j}x_j\xi_j)\Phi(\xi),
\end{equation*}
where $\Phi$ is the smooth hat function:
\begin{equation*}
\Phi(\xi)=\left\{\begin{array}{lr}
                 1&\qquad \xi\in [2\delta',1-2\delta']^d,\\
                 0&\qquad \xi\in [0,1]^d\backslash[\delta',1-\delta']^d,\\
                 \varTheta(\xi)\in[0,1]& \mbox{otherwise}.
                \end{array}\right.
\end{equation*}
for $\delta'$ sufficiently small (in particular, we need~$2(2\delta')^d < \delta$). Similarly to the fourth point of Theorem \ref{PropertiesBell}, we define the set of proper parallelepipeds
\begin{equation*}
 Y_t=\left\{ Q \colon Q = (k_j v_j )_{j=1,\ldots,d} + \Pi_{j=1}^{d} [0,w_j];\, k_j\in \{1\}\cup\big\{k_j \in \mathbb{N}\colon \; k_j< \frac{ t^{\gamma_j}x_j}{2\pi} - 1 \big\}\right\}.
\end{equation*}
where $ v_j=w_j=2\pi t^{-\gamma_j}x_j^{-1}$ if $x_j\neq 0$ and $v_j=\delta'$, $w_j=(1-2\delta')$ otherwise. For any~$\delta'$, we can choose~$t$ to be so large that
\begin{equation*}
\quad \Big|\bigcup_{\genfrac{}{}{0pt}{-2}{Q \in Y_t}{Q \subset [2\delta',1-2\delta']^d}}\!\!\!Q\Big| \geq 1-\delta. 
\end{equation*}
We put $B$ to be this union, i.e. the union of the parallelepipeds~$Q$ from the family~$Y_t$ that belong to~$[2\delta',1-2\delta']^d$ entirely.
 
If~$t$ is sufficiently large, then for every $\beta\in\mathbb{N}^d$ satisfying $0\leq\langle \beta, \gamma\rangle <k$, we have
\begin{equation}\label{smallnesofnablaPhi}
 \sup_{\xi\in [0,1]^d} |t^{-1} \partial^{\beta}[\Phi](\xi)| \leq \eps'.
\end{equation}
For any $\beta\in \mathbb{N}^d$, the following holds: 
\begin{equation*}
 \partial^{\beta}\left[\cos\left(\sum_{j=1}^d\! t^{\gamma_j}x_j\xi_j\right)\right]= \! t^{\langle \beta, \gamma \rangle} x^{\beta}  \partial^{\beta} [\cos] \left (\sum_{j=1}^d t^{\gamma_j}x_j\xi_j\right).
\end{equation*}

Since all $\alpha\in A$ have the same parity, we either have $\partial^{\alpha} [\cos](\xi) =(-1)^{\frac{|\alpha|}{2}}\cos(\xi) $ for every $\alpha\in A$ or $\partial^{\alpha} [\cos] (x)=(-1)^{\frac{|\alpha|+1}{2}}\sin(\xi) $ for every $\alpha\in A$. Without lost of generality we may assume $2\big||\alpha|$, because the functions sine and cosine are equimeasurable on their periodic domains.
Therefore, for every $\xi\in[0,1]^d$ and $\alpha\in A$ we have
\begin{equation}\label{nablal}
\begin{split}
 \partial^{\alpha}[l_{\xi,\eps,\delta}](\xi)&=\Phi (\xi)\partial^{\alpha}[t^{-k}\cos\big(\sum_{j=1}^d t^{\gamma_j}x_j\xi_j\big)]+ \sum\limits_{\substack{\alpha'+\beta=\alpha\\ \beta\neq (0,0,\ldots, 0)}} c_{\alpha', \beta}\, t^{-k}\partial^{\alpha'}\left[\cos\big(\sum_{j=1}^d t^{\gamma_j}x_j\xi_j(x)\big)\right]\partial^{\beta}[\Phi]
 \\&= \Phi(\xi) x^{\alpha} \partial^{\alpha}[\cos]\big(\sum_{j=1}^d t^{\gamma_j}x_j\xi_j\big)  + \sum\limits_{\substack{\alpha'+\beta=\alpha\\ \beta\neq (0,0,\ldots, 0)}} c_{\alpha', \beta}\, t^{\langle \alpha', \gamma \rangle -k}\partial^{\alpha'}[\cos]\big(\sum_{j=1}^d t^{\gamma_j}x_j\xi_j(x)\big)\partial^{\beta}[\Phi]
 \\&=  (-1)^{\frac{|\alpha|}{2}} x^{\alpha}\cos \big(\sum_{j=1}^d t^{\gamma_j}x_j\xi_j\big) + \hbox{error},
\end{split}
\end{equation} 
where the coefficients $c_{\alpha', \beta}$ come from the Leibniz formula. The error is~$O(\eps')$ in absolute value by~\eqref{smallnesofnablaPhi} and equals to zero on the set~$[2\delta',1-2\delta']^d$ (because the function~$\Phi$ is constant there).
For every $\xi\in [0,1]^d$ we have
\begin{equation*}
\begin{split}
  \gengrad[l_{\xi,\eps,\delta}](\xi)=  \sum_{\alpha\in A}\partial^{\alpha}[l_{\xi,\eps,\delta}](\xi)e_{\alpha}& =  \sum_{\alpha\in A}\left( (-1)^{\frac{|\alpha|}{2}} x^{\alpha}\cos (\sum_{j=1}^d t^{\gamma_j}x_j\xi_j) + \hbox{error}\right) e_{\alpha}
  \\&= e_x \cos(\sum_{j=1}^d t^{\gamma_j}x_j\xi_j) + \hbox{error}.
\end{split}
  \end{equation*}
Thus, for every $\xi\in [0,1]^d$ and $\eps'$ sufficiently small, we obtain
\begin{equation*}
  \left\|\gengrad[l_{\xi,\eps,\delta}](\xi)\right\|\leq \|e_x\| +\left\| \hbox{error}\right\|\leq \|e_x\|+  \eps .
\end{equation*}
Since the error equals to zero on the set~$[2\delta',1-2\delta']^d$, it follows from \eqref{nablal} that for every $\xi\in B$ we have
\begin{equation*}
  \gengrad[l_{\xi,\eps,\delta}](\xi)= \cos (\sum_{j=1}^d t^{\gamma_j}x_j\xi_j) e_x.
\end{equation*}
We note that the function $\cos (\sum_{j=1}^d t^{\gamma_j}x_j\xi_j) e_x$ restricted to any~$Q \in Y_t$ is equimeasurable (with respect to the measure~$\frac{dx}{|Q|}$ on~$Q$) with the function $\cos(2\pi t) e_x$,~$t \in [0,1]$, (one can verify this fact using an appropriate dilation). Since~$B$ is a union of several parallelepipeds~$Q$, the same holds with~$Q$ replaced by~$B$.
\end{proof}

\begin{Le}\label{ConvexityWithCosine}
Suppose that~$v:\mathbb{R} \to \mathbb{R}$ is a Lipschitz function such that 
\begin{equation}\label{meanconvex}
v(x) \leq \int\limits_{0}^{1}v(x + \lambda \cos (2 \pi t)) dt
\end{equation}
for any~$x,\lambda \in \mathbb{R}$.
Then\textup,~$v$ is convex.
\end{Le}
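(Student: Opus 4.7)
The plan is to reduce to the smooth case by convolution and then read off convexity from a second-order Taylor expansion at $\lambda=0$. The key observation is that the distribution of $\lambda\cos(2\pi T)$ for $T$ uniform on $[0,1]$ has mean zero and second moment $\lambda^{2}/2$, so the hypothesis \eqref{meanconvex} is essentially a sub-mean-value inequality against a symmetric, zero-mean law; for smooth $v$ this forces $v''\ge 0$.

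First, I would mollify. Let $\phi\in C_0^{\infty}(\mathbb{R})$ be a standard bump with $\int\phi=1$, set $\phi_{\eps}(y)=\eps^{-1}\phi(y/\eps)$, and define $v_{\eps}=v*\phi_{\eps}$. Since $v$ is Lipschitz, $v_{\eps}\in C^{\infty}(\mathbb{R})$ and $v_{\eps}\to v$ locally uniformly as $\eps\to 0$. By Fubini and the translation invariance of \eqref{meanconvex} (apply the hypothesis to each translate $x\mapsto v(x-y)$ and integrate against $\phi_{\eps}(y)\,dy$), the smoothed function $v_{\eps}$ satisfies the same inequality:
\begin{equation*}
v_{\eps}(x)\;\leq\;\int_{0}^{1}v_{\eps}\bigl(x+\lambda\cos(2\pi t)\bigr)\,dt\qquad\text{for all }x,\lambda\in\mathbb{R}.
\end{equation*}

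Next, for fixed $x$ and small $\lambda$, I would Taylor expand $v_{\eps}$ to second order. Using $\int_{0}^{1}\cos(2\pi t)\,dt=0$ and $\int_{0}^{1}\cos^{2}(2\pi t)\,dt=\tfrac{1}{2}$, and the continuity of $v_{\eps}''$, one gets
\begin{equation*}
\int_{0}^{1}v_{\eps}\bigl(x+\lambda\cos(2\pi t)\bigr)\,dt \;=\; v_{\eps}(x)+\frac{\lambda^{2}}{4}\,v_{\eps}''(x)+o(\lambda^{2}).
\end{equation*}
Plugging this into the inequality above and dividing by $\lambda^{2}/4$, letting $\lambda\to 0$ yields $v_{\eps}''(x)\ge 0$ at every $x$. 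Hence $v_{\eps}$ is convex for every $\eps>0$.

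Finally, since $v_{\eps}\to v$ pointwise (in fact locally uniformly) as $\eps\to 0$, and a pointwise limit of convex functions is convex, $v$ itself is convex. The only step that requires any care is the legitimacy of the Taylor expansion with a remainder uniform in $t$, but this is automatic because $|\cos(2\pi t)|\le 1$ and $v_{\eps}''$ is continuous; no serious obstacle is expected.
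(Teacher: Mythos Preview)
Your argument is correct and follows the same core idea as the paper: expand to second order in $\lambda$ and use $\int_0^1\cos(2\pi t)\,dt=0$, $\int_0^1\cos^2(2\pi t)\,dt=\tfrac12$ to force $v''\ge 0$. The only difference is the regularization device: the paper pairs $v$ with a nonnegative test function $\varphi\in C_0^\infty$, shifts the Taylor expansion onto $\varphi$, and concludes that $v''\ge 0$ as a distribution, whereas you mollify $v$ first, read off $v_\eps''\ge 0$ pointwise, and pass to the limit. These are dual versions of the same trick (indeed, $v_\eps''(x)=\langle v'',\phi_\eps(x-\cdot)\rangle$), and neither buys anything the other does not; just make sure to state that your mollifier $\phi$ is nonnegative so that the inequality survives the convolution.
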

\begin{proof}
We are going to verify that~$v$ is convex as a distribution, or what is the same, that the distribution~$v''$ is non-negatvie. For that, we multiply inequality \eqref{meanconvex} by a positive function $\varphi\in C^{\infty}_{0}(\mathbb{R})$. Since $v$ is a Lipschitz function, we can integrate it over $\mathbb{R}$: 
\begin{equation*}
\begin{split}
\int\limits_{\mathbb{R}} v(x)\phi(x) dx 
&\leq\int\limits_{\mathbb{R}} \int\limits_{0}^{1}v(x + \lambda \cos (2 \pi t))\varphi(x) dt dx
=\int\limits_{\mathbb{R}} \int\limits_{0}^{1}v(x)\varphi(x-\lambda \cos (2 \pi t)) dt dx
\\&=\int\limits_{\mathbb{R}} v(x)\int\limits_{0}^{1}\left( \varphi(x) - \lambda\cos(2\pi t) \varphi^{'} (x) +\frac{\lambda^2}{2}\cos^2 (2\pi t) \varphi^{''}(x) + o(\lambda^2)\right) dt dx
\\&=\int\limits_{\mathbb{R}}\left( v(x)\varphi(x) + v(x)\varphi^{''}(x) \frac{\lambda^2}{2} \left(\int\limits_{0}^{1}\cos^2 (2\pi t)\right) + o(\lambda^2)\right) dx.
\end{split}
\end{equation*}
Therefore,
\begin{equation*}
 0\leq\frac{1}{2}\left(\int\limits_{0}^{1}\cos^2 (2\pi t)dt\right)\int\limits_{\mathbb{R}} v(x)\phi^{''}(x)dx + \frac{o(\lambda^2)}{\lambda^2}.
\end{equation*}
Letting~$\lambda \to 0$, we show that $v^{''}$ as a distribution satisfies $v^{''}(\phi)\geq 0$ for all $\phi\in C^{\infty}_{0}(\mathbb{R})$ and  $\phi\geq 0$.
From the Schwartz theorem it follows that $v^{''}$ is a non negative measure of locally finite variation. Thus $v'$ is an increasing function and therefore $v$ is convex.
\end{proof}
\paragraph{Proof of Theorem~\ref{RankOne}.} 
The function $\Bell$ is a generalized quasi convex function, hence it satisfies \eqref{BellmanInequality} for every $\varphi\in C_0^{\infty}([0,1]^d)$. Let us fix $x\in\mathbb{R}^d, \lambda \in \mathbb{R}$. We plug $\lambda l_{x,\eps,\delta}$ into \eqref{BellmanInequality}. We get (for every $e\in E$)
\begin{equation*}
\begin{split}
\Bell(e) \leq \int\limits_{[0,1]^d} \Bell(e + \gengrad [\lambda l_{x,\eps,\delta}]) 
&= \int\limits_{B} \Bell(e+ \gengrad[\lambda l_{x,\eps,\delta}]) + \int\limits_{[0,1]^d\backslash B} \Bell(e + \gengrad [\lambda l_{x,\eps,\delta}]) 
\\&\leq \int\limits_{B} \Bell(e+ \gengrad[\lambda l_{x,\eps,\delta}]) + O\big(\lambda (\|e\|+\|e_x\| +\eps)\delta\big)
\end{split}
\end{equation*}
from Lemma \ref{Laminate}.
Since~$\gengrad [l_{x,\eps,\delta}]\big|_{B}$ is equimeasurable ($B$ equipped with the measure~$\frac{dx}{|B|}$) with~$\cos(2\pi t)e_x$, 
\begin{equation*}
\int\limits_{B} \Bell(e+ \gengrad[\lambda  l_{x,\eps,\delta}])\frac{dx}{|B|}= \int\limits_{[0,1]} \Bell(e+ \lambda \cos(2\pi t) e_x)dt.
\end{equation*}
Therefore,
\begin{equation*}
\Bell(e) \leq |B| \int\limits_{[0,1]} \Bell(e+ \lambda \cos(2\pi t) e_x)dt + O\big(\lambda (\|e\|+\|e_x\| +\eps)\delta\big).
\end{equation*}
Since for $\delta\to 0$, we have $|B|\to 1$, and then
\begin{equation}\label{rankonecos}
 \Bell(e) \leq \int\limits_{[0,1]} \Bell(e+ \lambda \cos(2\pi t) e_x)dt.
\end{equation}
For a fixed $e\in E$, consider the function $\mathbb{R} \ni s \mapsto \Bell(e + s e_x) $. By~\eqref{rankonecos},
\begin{equation*}
 \Bell(e+ s e_x)\leq  \int\limits_{[0,1]}\Bell(e+s e_x +\lambda \cos(2\pi t) e_x)dt.
\end{equation*}
Thus, by Lemma \ref{ConvexityWithCosine}, the function~$\mathbb{R} \ni s \mapsto \Bell(e + s e_x)$ is convex (one simply applies lemma to this function).  Since $e\in E$ and $x\in \mathbb{R}^d$,~$\lambda \in \mathbb{R}$ were arbitrary, it proves the generalized rank one convexity of the function $\Bell$.
\qed

\section{Separately convex homogeneous functions and proof of Theorem \ref{MainTheoremBis}}\label{S4}
\begin{Le}\label{Obvious}
Generalized rank one vectors span~$E$.
\end{Le}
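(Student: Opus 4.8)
}
The plan is to reduce the assertion to the elementary fact that distinct monomials are linearly independent as functions on~$\mathbb{R}^d$. Fix an index $\alpha_0\in A$ in Definition~\ref{RankOneTensor}; then the generalized rank one vectors form the polynomial family
\begin{equation*}
\Big\{\, p(x) := \sum_{\alpha\in A} i^{|\alpha|+|\alpha_0|}\, x^\alpha\, e_\alpha \;:\; x \in \mathbb{R}^d \,\Big\} \subset E .
\end{equation*}
Each scalar factor $i^{|\alpha|+|\alpha_0|}$ is nonzero, and, under the standing parity hypothesis of Theorem~\ref{MainTheoremBis}, it is moreover real, so the vectors $p(x)$ indeed lie in~$E$.

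To show that they span~$E$, I would argue by duality. Represent a linear functional on~$E$ as $\langle\cdot,b\rangle$ with $b=\sum_{\alpha\in A}b_\alpha e_\alpha$, and suppose it annihilates every $p(x)$. Then the polynomial
\begin{equation*}
x \longmapsto \sum_{\alpha\in A} i^{|\alpha|+|\alpha_0|}\, b_\alpha\, x^\alpha
\end{equation*}
vanishes identically on $\mathbb{R}^d$. Since the multi-indices in $A$ are pairwise distinct, the monomials $x^\alpha$, $\alpha\in A$, are linearly independent (e.g. by comparing Taylor coefficients at the origin, or by a one-variable-at-a-time Vandermonde argument), whence $i^{|\alpha|+|\alpha_0|}b_\alpha=0$, and therefore $b_\alpha=0$, for every $\alpha\in A$. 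So the only functional killing all generalized rank one vectors is the zero functional, which means they span~$E$.

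A more hands-on route, which amounts to the same computation, is worth recording: since $E$ is finite dimensional, every subspace is closed, so the span of $\{p(x):x\in\mathbb{R}^d\}$ contains all derivatives $\partial^\beta p\big|_{x=0}$; from $\partial^\beta[x^\alpha]\big|_{0}=\alpha!\,\delta_{\alpha\beta}$ one gets $\partial^\beta p\big|_{0}=i^{|\beta|+|\alpha_0|}\,\beta!\,e_\beta$ for each $\beta\in A$, exhibiting every basis vector $e_\beta$ in the span. I do not anticipate a genuine obstacle in either version; the only point needing a moment's care is the remark that the coefficients $i^{|\alpha|+|\alpha_0|}$ are nonzero (and real under the parity assumption), after which the statement is pure linear algebra.
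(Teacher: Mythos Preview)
Your proposal is correct and follows essentially the same duality argument as the paper: assume a linear functional annihilates all generalized rank one vectors, observe that this forces a nontrivial polynomial $\sum_{\alpha\in A} i^{|\alpha|+|\alpha_0|} b_\alpha x^\alpha$ to vanish identically on~$\mathbb{R}^d$, and conclude $b_\alpha=0$ from the linear independence of the monomials~$x^\alpha$. The additional derivative-at-the-origin remark is a pleasant alternative not in the paper, but the core reasoning coincides.
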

\begin{proof}
Since $E$ is a finite dimensional Hilbert space, every functional on $E$ is of the form $\phi^*(\cdot)= \langle \sum_{\alpha\in A} a_{\alpha} e_{\alpha},\;\cdot\;\rangle$.  We get
\begin{equation*}
 \phi^*(e_x)= \sum_{\alpha\in A} a_{\alpha} x^{\alpha} i^{|\alpha|+|\alpha_0|}
\end{equation*}
for every $x\in\mathbb{R}^d$.
If $E$ is not a span of generalized rank one vectors, then there exists a non trivial $\phi^*$ such that
\begin{equation*}
 0=\phi^*(e_x)=\sum_{\alpha\in A} a_{\alpha} x^{\alpha} i^{|\alpha|+|\alpha_0|}
\end{equation*}
for every $x\in\mathbb{R}^d$. However, $x^{\alpha}$ are linearly independent monomials. Therefore, $a_{\alpha}=0$ for every $\alpha\in A$. Hence $\phi^*\equiv 0$ and the generalized rank one vectors span $E$.
\end{proof}
We recall that our aim was to show that~$T_1$ is a linear combination of the other~$T_j$. By comparing the kernels of the~$\tilde{T}_j$, its is equivalent to the fact that~$V \geq 0$ everywhere. By the evident inequality~$B \leq V$, it suffices to prove that~$B$ is non-negative. By Lemma~\ref{Obvious} and Theorem~\ref{RankOne}, this will follow from the theorem below. Hence it suffices to prove Theorem \ref{SeparateTheorem}  to get Theorem \ref{MainTheoremBis}.
\begin{Def}\label{SeparatelyConvex}
A function~$F: \mathbb{R}^d \to \mathbb{R}$ is separately convex if it is convex with respect to each variable.
\end{Def}
\begin{Th}\label{SeparateTheorem}
A function~$F:\mathbb{R}^d \to \mathbb{R}$ that is separately convex and positively homogeneous of order one is non-negative
\end{Th}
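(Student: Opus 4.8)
The plan is to argue by contradiction and reduce to the one-dimensional picture using positive homogeneity together with separate convexity. Suppose $F$ is separately convex, positively one-homogeneous, and $F(v_0) < 0$ for some $v_0 \in \mathbb{R}^d$. Homogeneity forces $F(0) = 0$, so $v_0 \neq 0$, and on the whole ray $\{t v_0 : t \geq 0\}$ we have $F(t v_0) = t F(v_0)$, which is a \emph{strictly decreasing} affine function of $t$ — in particular $F \to -\infty$ along this ray. I would like to derive a contradiction with boundedness-type consequences of separate convexity on a suitable compact set.

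The key geometric step is the following: a separately convex function on $\mathbb{R}^d$ attains its maximum over any closed axis-parallel box at a vertex of the box. This is a standard consequence of applying one-variable convexity coordinate by coordinate (convex functions on an interval attain their max at an endpoint). So first I would fix a large cube $Q = [-R, R]^d$ and note $\max_Q F = F(w)$ for some vertex $w$ of $Q$; the vertices are the $2^d$ points with all coordinates $\pm R$. By one-homogeneity, $F(w) = R\, F(w/R)$ where $w/R$ ranges over the fixed finite set $S = \{\pm 1\}^d$; hence $\max_Q F = R \cdot M$ where $M = \max_{s \in S} F(s)$ is a constant independent of $R$.

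Now here is the tension I want to exploit. The value of $F$ at the origin is $0$, and $0$ is the center of $Q$, i.e. $0 = \frac{1}{2}(s) + \frac{1}{2}(-s)$ for each $s \in S$ (or more usefully, $0$ is a convex combination of the vertices). Applying separate convexity along each coordinate axis in turn, starting from a vertex and sweeping to the opposite vertex through the center, I get $F(0) \leq M$ — wait, that only gives $0 \leq RM$, i.e. $M \geq 0$, which is the wrong direction to immediately contradict $F(v_0) < 0$. The substantive idea must instead go the other way: I should use that separate convexity gives a \emph{lower} bound on $F$ at a box vertex in terms of values at interior points, by writing a vertex as lying on a line segment with the center in the \emph{interior}. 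Concretely, for a vertex $w$ of $Q$ and the opposite vertex $-w$, the origin is the midpoint; convexity along the diagonal is \emph{not} available (only separate convexity), so instead I sweep: fixing all but one coordinate, $F$ restricted to that line is convex, so its value at an endpoint is at least $2F(\text{midpoint}) - F(\text{other endpoint})$. Iterating this reflection through the center across all $d$ coordinates expresses $F(w)$ as an alternating combination $\sum_{s \in S} (-1)^{\sigma(s)} F(s')$ of values at dilated points, with the crucial gain that the "error'' terms can be made to involve the value $F(v_0)$.

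The cleanest route, which I expect is the intended one, is: place $v_0$ (scaled) as an interior point of a cube $Q$, use that the max of $F$ on $Q$ is $R \cdot M$ at a vertex, and use separate convexity to bound $F$ at that vertex \emph{below} by a telescoping reflection argument anchored at $v_0$, getting $F(\text{vertex}) \geq c_1 + c_2 R$ with $c_2 > 0$ coming from $-F(v_0) > 0$ and the geometry — contradicting $F(\text{vertex}) = R M$ once $R$ is large, \emph{unless} $M$ is itself large, but $M$ is a fixed constant. Turning this schematic into the right inequality is the main obstacle: I must choose the sweeping order and the segments so that the negative ray value $F(v_0) < 0$ enters with a sign that blows the vertex value up faster than the homogeneous rate $RM$ allows. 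I would carry this out in the order: (1) prove the vertex-maximum principle for separately convex functions; (2) deduce $\max_{[-R,R]^d} F = R M$ with $M$ a fixed finite constant; (3) by a reflection/telescoping argument along coordinate lines through a point on the ray $\mathbb{R}_{>0} v_0$, bound some vertex value from below by an expression that is superlinear — or at least, linear with slope exceeding $M$ — in $R$; (4) conclude the contradiction by letting $R \to \infty$.
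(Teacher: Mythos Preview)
Your proposal correctly establishes two preliminary facts: that a separately convex function attains its maximum over any axis-parallel box at a vertex, and that by positive one-homogeneity this maximum over $[-R,R]^d$ equals $RM$ for the fixed constant $M=\max_{s\in\{\pm1\}^d}F(s)$. However, step~(3) is the entire content of the argument, and you have not supplied it; you say yourself that ``turning this schematic into the right inequality is the main obstacle.'' The difficulty is genuine: a reflection through a fixed point $v_0$ (or even through $Rv_0$) along coordinate axes produces inequalities of the form $F(b)\geq 2F(a)-F(c)$, but the subtracted point $c$ again lies at distance of order $R$ from the origin, so by homogeneity $|F(c)|=O(R)$ and the resulting lower bound on $F(\text{vertex})$ is linear in $R$ with a slope that has no reason to exceed $M$. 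Indeed, since every vertex $w$ of $[-R,R]^d$ satisfies $F(w)=R\,F(w/R)\leq RM$ \emph{by definition of $M$}, no vertex lower bound can ever strictly exceed $RM$; you would need a genuinely superlinear lower bound at some point of the cube, and nothing in the reflection mechanism delivers one. As it stands, your scheme yields at best $M\geq 0$ (from $F(0)=0\leq RM$), which falls well short of $F\geq 0$.

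The paper's proof takes a completely different route: induction on $d$. One sets $G(x)=F(x,1)$ for $x\in\mathbb{R}^{d-1}$, shows that $G$ is separately convex and also convex along each ray from the origin (for $t>0$), and then introduces the auxiliary function
\[
V(x)=\lim_{t\to 0^+}\frac{G(tx)+G(-tx)-2G(0)}{t}.
\]
This $V$ is separately convex and positively one-homogeneous on $\mathbb{R}^{d-1}$, hence $V\geq 0$ by the induction hypothesis; that upgrades the radial convexity of $G$ to convexity of $t\mapsto G(tx)$ on all of $\mathbb{R}$. Combining the midpoint inequality $G(x)+G(-x)\geq 2F(x,0)$ with the asymptotic slope identity $\lim_{t\to\pm\infty}G(tx)/t=F(x,0)$ and one-variable convexity then gives $|G(x)-G(-x)|\leq 2F(x,0)$, and adding these two inequalities yields $G(x)=F(x,1)\geq 0$. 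The idea you are missing is precisely this dimension-reduction through the auxiliary homogeneous function $V$; the vertex-maximum observation, while correct, does not by itself carry enough information to close the argument.
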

Before passing to the proof, we cite Theorem~$2.31$ of the book~\cite{Dacorogna}, which says that a separately convex function  is continuous. This fact will be implicitly used several times in the reasoning below. 
\begin{proof}
We proceed by induction. Suppose that the statement of the theorem holds true for the dimension~$d-1$, we then prove it for the dimension~$d$. Construct the function~$G:\mathbb{R}^{d-1}\to\mathbb{R}$ by the formula
\begin{equation*}
G(x) = F(x,1),\quad x \in \mathbb{R}^{d-1}.
\end{equation*}
This function is separately convex and convex with respect to radius, i.e. for every $x\in \mathbb{R}^{d-1}$ the function $\mathbb{R}_{+}\ni t \mapsto G(tx)$ is a convex function. Indeed, the function $F$ is positively one homogeneous and separately convex, thus for $t,r>0$ and $\tau\in (0,1)$ we have:
\begin{equation*}
\begin{split}
 \tau G(tx) &+(1-\tau) G(rx)= \tau F( tx ,1) +(1-\tau) F(rx,1)
 =(\tau t + (1-\tau) r) \left( \frac{\tau t F( x ,\frac{ 1}{ t}) +(1-\tau)r F(x,\frac{1}{r})}{ \tau t + (1-\tau) r }\right)
 \\&\geq (\tau t + (1-\tau) r) F\left ( x, \frac{1}{\tau t + (1-\tau) r }\right)
 = F\left((\tau t + (1-\tau) r)x, 1\right) = G\left( (\tau t + (1-\tau) r)x\right).
\end{split}
 \end{equation*}
We claim that for each~$x\in\mathbb{R}^{d-1}$ the function~$\mathbb{R}\ni t \mapsto G(tx)$ is convex. Since the function~$G$ is continuous, it suffices to prove that~$G(tx) + G(-tx) \geq G(0)$ for all~$t \in \mathbb{R}$. Consider another function~$V$:
\begin{equation*}
V(x) = \lim_{t\to 0+}\frac{G(tx) + G(-tx)- 2G(0)}{t},\quad x \in\mathbb{R}^{d-1}.
\end{equation*}
The limit exists due to the convexity with respect to radius. This function~$V$ is positively one homogeneous and separately convex.  However, it may have attained the value~$-\infty$. Fortunately, this is not the case. If there exists $x\in\mathbb{R}^d$ such that $V(x)=-\infty$ then the following holds:
\begin{equation*}
 2 V(0,x_2,\ldots, x_d ) \leq V(x_1, \ldots , x_d) + V(-x_1, \ldots, x_d)=-\infty.
\end{equation*}
Therefore $V(0,x_2,\ldots, x_d )=-\infty$. We repeat the above reasoning with $x_2, \ldots, x_d$ instead of $x_1$ and we get that $V(0)=-\infty$, but from the definition of $V$ we know that
\begin{equation*}
 V(0)=\lim_{t\to 0+}\frac{G(0) + G(0)- 2G(0)}{t}=0.
\end{equation*}
Hence $V(x)$ is finite for every $x\in\mathbb{R}^{d-1}$. Thus, by the induction hypothesis,~$V$ is non-negative. So,~$\mathbb{R}\ni t \mapsto G(tx)$ is a convex function.

By symmetry,~$G(x) + G(-x) \geq 2F(x,0)$. On the other hand,~$\lim_{t \to \pm\infty}\frac{G(tx)}{t} = F(x,0)$. So, the convexity of $t\mapsto G(tx)$ gives the inequality~$|G(x) - G(-x)| \leq 2F(x,0)$. Adding these two inequalities, we get that~$F(x,1) \geq 0$.    
\end{proof}
\paragraph{Proof of Theorem~\ref{MainTheoremBis}.} Assume that inequality~\eqref{MainNonInequality} holds. Then, by Theorem~\ref{PropertiesBell}, the function~$\Bell$ given by~\eqref{Bellman} is Lipshitz, positively one homogeneous, generalized quasi convex, and satisfies the inequality~$B \leq V$, where the function~$V$ is given by formula~\eqref{FunctionV}. Then, by Theorem~\ref{RankOne},~$\Bell$ is a generalized rank one convex function. 

Let~$e \in E$ be an arbitrary point. By Lemma~\ref{Obvious},~$e$ is a linear combination of generalized rank one vectors~$e_{x_1},e_{x_2},\ldots,e_{x_k}$. We may assume that they are linearly independent. Consider the function~$F: \mathbb{R}^k \to \mathbb{R}$ given by the rule
\begin{equation*}
F(z_1,z_2,\ldots,z_k) = \Bell(z_1e_{x_1} + z_2e_{x_2} + \ldots + z_ke_{x_k}).
\end{equation*}
By the generalized rank one convexity of~$\Bell$,~$F$ is separately convex. It is also positively one homogeneous, thus~$F \geq 0$ by Theorem~\ref{SeparateTheorem}. Therefore,~$\Bell(e)$ is also non-negative for arbitrary~$e \in E$. 

Since~$\Bell \geq 0$, we have~$V \geq 0$. In such a case, it follows from formula~\eqref{FunctionV} that~$\Ker \tilde{T}_1 \supset \cap_{j = 2}^{\ell} \Ker \tilde{T}_j$. Therefore,~$T_1$ is a linear combination of the other~$T_j$.
\qed

\section{Related questions}\label{S5}
\paragraph{Towards Conjecture~\ref{MainTheorem}.} The following statement plays the same role in view of Conjecture~\ref{MainTheorem}, as Theorem~\ref{SeparateTheorem} plays in the proof of Theorem~\ref{MainTheoremBis}.
\begin{Cj}\label{Harmonic}
Let~$F:\mathbb{R}^{2d} \to \mathbb{R}$ be a Lipschitz positively homogeneous function of order one. Suppose that for any~$j = 1,2,\ldots,d$ the function~$F$ is subharmonic with respect to the variables~$(x_j,x_{j+d})$. Then\textup,~$F$ is non-negative.
\end{Cj}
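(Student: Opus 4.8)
A plausible route to Conjecture~\ref{Harmonic}, parallel to the proof of Theorem~\ref{SeparateTheorem}, is to argue by induction on $d$, reading subharmonicity in the pair $(x_j,x_{j+d})$ as subharmonicity in the complex variable $z_j=x_j+ix_{j+d}$ and replacing ``convexity in one real variable'' by ``subharmonicity in one complex variable''. As throughout the paper, ``positively homogeneous of order one'' is taken to mean $F(\lambda v)=|\lambda|F(v)$, so $F$ is even.

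The base case $d=1$ can be settled outright. In polar coordinates $x_1=r\cos\theta$, $x_2=r\sin\theta$, homogeneity gives $F=r\,g(\theta)$ with $g$ Lipschitz, and evenness forces $g$ to have period $\pi$. Since $F$ is Lipschitz there is no atom of $\Delta F$ at the origin and $\Delta F=r^{-1}(g''+g)$ in the sense of distributions, so subharmonicity of $F$ is equivalent to $g''+g$ being a non-negative (hence finite and $\pi$-periodic) Radon measure $\mu$ on $\mathbb{R}/\pi\mathbb{Z}$. The kernel of $\partial_\theta^2+1$ is $\operatorname{span}\{\cos\theta,\sin\theta\}$ and neither function is $\pi$-periodic, so this operator is invertible on $\pi$-periodic distributions; its Green's function there is $K(\theta)=\tfrac12|\sin\theta|\geq 0$, whence $g=K*\mu\geq 0$ and $F\geq 0$.

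For the inductive step one would fix $z_d=1$ and set $G(z')=F(z',1)$ on $\mathbb{C}^{d-1}$. As a restriction of $F$ it is separately subharmonic; $F(\,\cdot\,,0)$ is Lipschitz, one-homogeneous, even and separately subharmonic on $\mathbb{C}^{d-1}$, hence non-negative by the induction hypothesis; and homogeneity gives $G(tz')=tF(z',1/t)$ for $t>0$ and $\lim_{t\to\pm\infty}t^{-1}G(tz')=\pm F(z',0)$. In the convex setting one now shows that $t\mapsto G(tz')$ is convex through $0$ --- first for $t>0$ from convexity of $s\mapsto F(z',s)$, and then across $0$ through the one-homogeneous separately convex (hence, by the induction hypothesis, non-negative) function $V(z')=\lim_{t\to0+}t^{-1}\bigl(G(tz')+G(-tz')-2G(0)\bigr)$ --- and finishes by combining the slope identities with the inequality $G(z')+G(-z')\geq 2F(z',0)$, coming from convexity in the last variable, to force $G\geq 0$; then homogeneity and evenness give $F\geq 0$. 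The plan is to run this scheme in the subharmonic setting.

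The hard part --- and the reason this is only a conjecture --- is that almost every use of one-real-variable convexity in the inductive step fails for subharmonic functions. Subharmonicity of $F$ in $z_d$ is \emph{not} inherited by the restriction of $F$ to a real line, let alone to a ray (already $\zeta\mapsto\log|\zeta|$ is concave on $\mathbb{R}_{>0}$), so $t\mapsto G(tz')$ need not be convex, the function $V$ need not even exist, and the two-point inequality $G(z')+G(-z')\geq 2F(z',0)$ fails in general; only the sub-mean-value property over full circles survives, and it does not see a single antipodal pair. The same feature dooms the naive alternative of picking a direction where $F/|v|$ is least on the sphere and comparing $F$ with a function $-m|v|$, $m>0$, touching it there: a non-negative subharmonic function may vanish at an interior point, so no contradiction arises. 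Even the reduction is delicate, since fixing $z_d=1$ recovers $F$ only on the real slice $\{x_{2d}=0\}$ and not on the slices $\{z_d=e^{i\psi}\}$, $\psi\neq 0$. A successful argument would presumably have to substitute for one-dimensional convexity some genuinely complex-analytic input: a Riesz-type representation of the subharmonic slices $\zeta\mapsto F(z',\zeta)$, which are of order at most one, a Phragm\'en--Lindel\"of analysis of their harmonic parts using the linear growth, and a way to transport the (already available) non-negativity of $F(\,\cdot\,,0)$ through it. Getting these ingredients to fit together is exactly the open problem.
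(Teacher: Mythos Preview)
There is nothing to compare against: in the paper this statement is a \emph{conjecture}, and the authors say explicitly that they are not able to prove it. They record only two facts around it --- that the case $d=1$ is true (indeed $F$ is then convex), and that a subharmonic one-homogeneous function on $\mathbb{R}^3$ can be negative, so one cannot hope to weaken the pairwise hypothesis.

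Your write-up is not a proof either, and you are upfront about that. What you give is (i) a clean proof of the base case $d=1$ via the Green's function $K(\theta)=\tfrac12|\sin\theta|$ for $\partial_\theta^2+1$ on $\mathbb{R}/\pi\mathbb{Z}$, and (ii) a careful post-mortem of the natural attempt to push the induction of Theorem~\ref{SeparateTheorem} through with ``convex in one real variable'' replaced by ``subharmonic in one complex variable''. Both parts are accurate. The $d=1$ argument is correct as stated (and recovers what the paper asserts without proof); one small addendum is that your representation $g=K*\mu$ actually yields convexity of $F$ as well, matching the paper's remark, since $K$ is concave on $(0,\pi)$ and a short computation with the second differences shows $g$ dominates any supporting line. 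Your diagnosis of the inductive step is also on target: subharmonicity in $z_d$ does not restrict to convexity on real rays, so neither the radial convexity of $G$, nor the existence of the auxiliary function $V$, nor the two-point inequality $G(z')+G(-z')\geq 2F(z',0)$ survives; and fixing $z_d=1$ only recovers $F$ on the slice $\{x_{2d}=0\}$.

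In short: the paper offers no proof, and your proposal correctly identifies the statement as open and isolates precisely where the separately-convex argument breaks. There is no gap to point to, because you do not claim to close one.
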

Indeed, plugging the cosine function into~\eqref{BellmanInequality} as we did in the proof of Theorem~\ref{RankOne} leads to ``subharmonicity''\footnote{The ``subharmonicity'' means that~$D\Bell \geq 0$ as a distribution, where~$D$ is an elliptic symmetric differential operator of second order (with constant real coefficients); one can then pass to usual subharmonicity by an appropriate change of variable.} of the function~$\Bell$ in the directions of projections of a generalized rank one vector onto subspaces generated by odd and even monomials in~$A$ correspondingly. Therefore, Conjecture~\ref{MainTheorem} follows from Conjecture~\ref{Harmonic}.

We are not able to prove Conjecture~\ref{Harmonic}. However, we know the following: in the case~$d=1$, the function~$F$ is not only non-negative, but, in fact, convex (i.e. a positively one homogeneous subharmonic function is convex). On the other hand, there is no much hope for simplifications: a subharmonic positively one homogeneous function in~$\mathbb{R}^3$ (and thus in~$\mathbb{R}^d$,~$d \geq 3$) can attain negative values, e.g. in~$\mathbb{R}^4$ one may take the function~$\frac{x_1^2 + x_2^2 + x_3^2 - x_4^2}{\sqrt{x_1^2 + x_2^2 + x_3^2}}$. 

There are also reasons that differ from the ones discussed in the present paper that may ``break'' inequality~\eqref{MainNonInequality}. One of them is  a certain geometric  property of the spaces generated by the operators~$T_j$. Not stating any general theorem or conjecture, we treat an instructive example. Consider the non-inequality
\begin{equation}\label{example}
\|\partial_1^{2}\partial_2 f\|_{L_1} \lesssim \|\partial_1^{4} f\|_{L_1} + \|\partial_2^2 f\|_{L_1}.
\end{equation} 
Conjecture~\ref{MainTheorem} hints us that it cannot be true. We will disprove it on the torus~$\mathbb{T}^2$ and leave to the reader the rigorous formulation and proof of the corresponding transference principle, whose heuristic form is ``inequalities of the sort~\eqref{MainNonInequality} are true or untrue simultaneously on the torus and the Euclidean space''. Consider two anisotropic homogeneous Sobolev spaces~$W_1$ and~$W_2$, which are obtained from the set of trigonometric polynomials by completion and factorization over the null-space with respect to the seminorms
\begin{equation*}
\|f\|_{W_1} = \|\partial_1^{4} f\|_{L_1} + \|\partial_2^2 f\|_{L_1}, \quad \|f\|_{W_2} = \|\partial_1^{2}\partial_2 f\|_{L_1} + \|\partial_1^{4} f\|_{L_1} + \|\partial_2^2 f\|_{L_1}.
\end{equation*}
If inequality~\eqref{example} holds true, then these two spaces are, in fact, equal (the identity operator is a Banach space isomorphism between these spaces). However, it follows from the results of~\cite{PW} (see~\cite{W2,W3} as well) that~$W_2$ has a complemented translation-invariant Hilbert subspace\footnote{That means that there exists a subspace~$X \subset W_2$ such that~$g \in X$ whenever~$g(\cdot + t) \in X$,~$t \in \mathbb{T}^2$,~$X$ is isomorphic to an infinite dimensional Hilbert space, and there exists a continuous projector~$P: W_2 \to X$.}, whereas~$W_1$
does not, a contradiction.

\paragraph{Martingale transforms.}
Let~$S = \{S_n\}_n$,~$n \in \{0\}\cup \mathbb{N}$, be an increasing filtration of finite algebras on the standard probability space. We suppose that it differentiates~$L_1$ (i.e. for any~$f \in L_1(\Omega)$ the sequence~$\E(f\mid S_n)$ tends to~$f$ almost surely). We will be working with martingales adapted to this filtration. 
\begin{Def}\label{MartingaleTransform}
Let~$\alpha = \{\alpha_n\}_n$ be a bounded sequence. The linear operator
\begin{equation*}
T_{\alpha}[f] = \sum\limits_{j=1}^{\infty} \alpha_{j-1}(f_{j} - f_{j-1}), \quad f = \{f_n\}_n\;\hbox{is an~$L_1$ martingale},
\end{equation*}
is called a martingale transform.  
\end{Def}
Our definition is not as general as the usual one, and we refer the reader to the book~\cite{Osekowski} for the information about such type operators. We only mention that martingale transforms serve as a probabilistic analog for the Calder\'on--Zygmund operators. The probabilistic version of Conjecture~\ref{MainTheorem} looks like this.
\begin{Cj}\label{MartingaleConjecture}
Suppose~$\alpha^1,\alpha^2,\ldots,\alpha^{\ell}$ are bounded sequences. Suppose that the algebras~$S_n$ unifromly grow\textup, i.e. there exists~$\gamma < 1$ such that each atom~$a$ of~$S_n$ is split in~$S_{n+1}$ into atoms of probability not greater than~$\gamma|a|$ each. The inequality
\begin{equation}\label{MartingaleInequality}
\|T_{\alpha^1}f\|_{L_1} \lesssim \sum\limits_{j=2}^{\ell}\|T_{\alpha^j}f\|_{L_1}
\end{equation}
holds for any martingale~$f$ adapted to~$\{S_n\}_n$ if and only if~$\alpha^1$ is a sum of a linear combination of the~$\alpha^j$ and an~$\ell_1$ sequence. 
\end{Cj}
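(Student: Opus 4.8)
\textbf{Proof proposal for Conjecture~\ref{MartingaleConjecture}.}

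The plan is to mimic the Bellman function strategy from Sections~\ref{S2}--\ref{S4}, replacing the generalized gradient of a smooth function on the cube by a martingale increment on the probability space, and the cube partition into rescaled parallelepipeds by the filtration atoms. First, observe that the ``if'' direction is the easy one: if $\alpha^1 = \sum_{j=2}^\ell c_j \alpha^j + \beta$ with $\beta \in \ell_1$, then $T_{\alpha^1} = \sum_j c_j T_{\alpha^j} + T_\beta$, and $\|T_\beta f\|_{L_1} = \|\sum_j \beta_{j-1}(f_j - f_{j-1})\|_{L_1} \le \sum_j |\beta_{j-1}| \, \|f_j - f_{j-1}\|_{L_1} \lesssim \|\beta\|_{\ell_1} \sup_j \|f_j\|_{L_1} \lesssim \|\beta\|_{\ell_1}\|f\|_{L_1}$ after summing the telescoping bounds appropriately; combined with the triangle inequality this yields~\eqref{MartingaleInequality}. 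So the content is the ``only if'' direction, and I would argue by contraposition, exactly as in the proof of Theorem~\ref{MainTheoremBis}.

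For the ``only if'' direction, I would set up the following Bellman-type object. Fix a parameter $N$ and let $E_N$ be the finite-dimensional space of ``tails'' of the sequences, or more precisely work with the vector $\tilde T_j$ built from the first $N$ coordinates of $\alpha^j$; as in~\eqref{FunctionV} define $V(e) = \sum_{j=2}^\ell |\tilde T_j e| - c|\tilde T_1 e|$ on the finite-dimensional space spanned by the increments, and define
\begin{equation*}
\Bell(e) = \inf \E\, V\!\left(e + \textstyle\sum_{j} \alpha_{j-1}(f_j - f_{j-1})\right),
\end{equation*}
the infimum taken over all finite martingales $f$ adapted to the filtration with $f_0 = 0$. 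The uniform-growth hypothesis (the constant $\gamma<1$) is the substitute for ``the diameter of the parallelepipeds $Q_y$ tends to zero'': it guarantees that, after enough refinement steps, the conditional distribution of a martingale difference on an atom can be made to approximate any prescribed mean-zero distribution (in particular the two-point distribution $\pm \lambda$, which is the probabilistic analogue of $\cos(2\pi t)$). Running the Bellman induction step on the splitting of a single atom, one gets the analogue of Theorem~\ref{PropertiesBell}: $\Bell$ is Lipschitz, positively one-homogeneous, bounded by $V$, and satisfies the ``martingale quasiconvexity'' $\Bell(e) \le \E\,\Bell(e + d)$ for every bounded mean-zero $d$ realizable as a martingale difference. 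Testing this against a $\pm\lambda$ difference in the direction of a single coordinate $e_k$ (i.e. the vector whose only nonzero entries are $\alpha^j_{k-1}$ in slot $j$) and applying Lemma~\ref{ConvexityWithCosine} — whose hypothesis~\eqref{meanconvex} is precisely $v(x) \le \frac12 v(x+\lambda) + \frac12 v(x-\lambda)$ after replacing $\cos$ by a symmetric two-point law, and the lemma's proof goes through verbatim — shows that $\Bell$ is convex in each such coordinate direction. These coordinate directions span the whole space (the increments $e_1,\dots,e_N$ clearly span), so $\Bell$ is separately convex in a genuine coordinate system; being also positively one-homogeneous, Theorem~\ref{SeparateTheorem} forces $\Bell \ge 0$, hence $V \ge 0$ everywhere.

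The last step is to convert $V \ge 0$ on every finite truncation into the structural statement about $\alpha^1$. From $V \ge 0$ on $E_N$ one gets $\bigcap_{j=2}^\ell \Ker \tilde T_j^{(N)} \subset \Ker \tilde T_1^{(N)}$ for every $N$, i.e. the restriction of $\alpha^1$ to the $N$-dimensional coordinate block lies in the span of the restrictions of the $\alpha^j$; a compactness/stability argument in $N$ (tracking the coefficients $c_j = c_j(N)$ and showing they can be chosen bounded and convergent) then yields a single bounded sequence $(c_j)$ with $\alpha^1 - \sum_{j=2}^\ell c_j \alpha^j \to 0$, and a quantitative version of the non-inequality — the constant $c$ surviving in the truncated problems — upgrades this decay to summability, producing the required $\ell_1$ correction. \textbf{The main obstacle} I anticipate is not the linear-algebra endgame but the probabilistic replacement of the scaling lemma (Lemma~\ref{Laminate}): constructing, on the given filtration with only the uniform-growth hypothesis, an adapted martingale whose difference sequence is (approximately) equidistributed with a prescribed symmetric two-point variable on a large-measure subset, while keeping the ``error'' outside that subset small in the sense needed for the $\delta \to 0$ passage in the proof of Theorem~\ref{RankOne}. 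Unlike the cube, where exact self-similar rescaling is available, here one only has approximate control atom-by-atom, so the equimeasurability in part (3) of Lemma~\ref{Laminate} must be weakened to an $\eps$-approximate equidistribution and one must check that Lemma~\ref{ConvexityWithCosine} still applies — plausibly it does, since convexity is a closed condition, but this is where the real work lies.
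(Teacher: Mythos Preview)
The statement is labelled a \emph{conjecture} in the paper, and the authors say explicitly that they ``are not able to prove the conjecture in the full generality''; they establish only the periodic special case, Theorem~\ref{MartingaleTheorem}. So there is no paper proof to compare your proposal against --- the relevant question is whether your outline actually closes the gap the authors left open, and it does not.

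The decisive obstacle is not the one you flag. Producing an adapted martingale whose increment approximates a symmetric two-point law is doable under the uniform-growth hypothesis (atoms shrink geometrically, so a Skorokhod-type construction works); this is the easy part. What breaks is the Bellman induction itself. In the paper's proof of Theorem~\ref{MartingaleTheorem} the Bellman function lives on~$\mathbb{R}^{\ell}$, one coordinate per operator, and convexity in the direction~$v_n=(\alpha^1_n,\ldots,\alpha^{\ell}_n)$ comes from testing with a martingale that waits until time~$n$, jumps once, and then \emph{restarts the optimisation}: periodicity is precisely what makes the post-jump infimum equal to~$\Bell$ again. Without periodicity, after a jump at time~$n$ the remaining martingale sees the shifted sequences~$(\alpha^j_{n+1},\alpha^j_{n+2},\ldots)$, and the associated infimum defines a \emph{different} function~$\Bell_n$; one obtains a chain~$\Bell_0(x)\leq \E\,\Bell_1(x+v_0\,g)$, $\Bell_1(x)\leq \E\,\Bell_2(x+v_1\,g)$, \ldots, which is not separate convexity of any single function, so Theorem~\ref{SeparateTheorem} does not apply. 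Your truncation space~$E_N$ does not repair this (and as written the expression~$V(e+\sum_j\alpha_{j-1}(f_j-f_{j-1}))$ does not type-check: $e\in E_N$ while the sum is a scalar, or at best a vector in~$\mathbb{R}^{\ell}$). Two further gaps: the endgame is hand-waving --- ``$V\geq 0$ on every truncation'' only places~$\alpha^1$ in the closure of the span of the~$\alpha^j$, not within~$\ell_1$ of it, and no ``quantitative version of the non-inequality'' has been proved that would upgrade this --- and your ``if'' direction is false as stated: take~$\ell=2$, $\alpha^2\equiv 0$, $\alpha^1\in\ell_1\setminus\{0\}$; then the right side of~\eqref{MartingaleInequality} vanishes identically while the left does not, so bounding~$\|T_{\beta}f\|_{L_1}$ by~$\|\beta\|_{\ell_1}\|f\|_{L_1}$ is not enough.
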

We do not know whether the condition of uniform growth fits this conjecture. Anyway, it is clear that one should require some condition of this sort (otherwise one may take~$S_n=S_{n+1}=\ldots = S_{n+k}$ very often and loose all the control of the sequences~$\alpha^j$ on this time intervals). Again, we are not able to prove the conjecture in the full generality, but will deal with an important particular case.
\begin{Th}\label{MartingaleTheorem}
Suppose~$\alpha^1,\alpha^2,\ldots,\alpha^{\ell}$ to be bounded periodic sequences. The inequality
\begin{equation*}
\|T_{\alpha^1}f\|_{L_1} \lesssim \sum\limits_{j=2}^{\ell}\|T_{\alpha^j}f\|_{L_1}
\end{equation*}
holds if and only if~$\alpha^1$ is a linear combination of the other~$\alpha^j$. 
\end{Th}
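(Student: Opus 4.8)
The plan is to mimic the structure of the proof of Theorem~\ref{MainTheoremBis}: restate the inequality as the non-negativity of a Bellman-type function on a finite-dimensional space, show that function is positively one-homogeneous and convex along a spanning family of ``rank one'' directions, and then invoke Theorem~\ref{SeparateTheorem}. The ``only if'' direction is the content; the ``if'' direction is trivial (a linear combination of the $T_{\alpha^j}$ gives the bound with constant the sum of the absolute values of the coefficients). So assume the inequality holds, hence there is a small $c>0$ with $\inf_f\big(\sum_{j=2}^\ell\|T_{\alpha^j}f\|_{L_1}-c\|T_{\alpha^1}f\|_{L_1}\big)=0$ over martingales $f$ with $f_0=0$.

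Let $p$ be a common period of the sequences $\alpha^1,\dots,\alpha^\ell$. The key observation is that the martingale transform is built from the increments $d_j=f_j-f_{j-1}$, and over one period the ``state'' is naturally an element of the space $\mathbb{R}^p$ recording how much weight each residue class $j \bmod p$ carries; more precisely, one should define, for a point $e\in\mathbb{R}^p$,
\begin{equation*}
\Bell(e)=\inf_{f}\;\E\Big(\sum_{j=2}^\ell\big|\,\tilde T_{\alpha^j}(e)+T_{\alpha^j}f\,\big|-c\,\big|\,\tilde T_{\alpha^1}(e)+T_{\alpha^1}f\,\big|\Big),
\end{equation*}
where $\tilde T_{\alpha^j}$ is the linear functional on $\mathbb{R}^p$ contracting $e$ against the periodic pattern of $\alpha^j$, and the infimum is over finite martingales. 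Exactly as in Theorem~\ref{PropertiesBell} one checks: $\Bell$ is finite, Lipschitz, satisfies $\Bell\le V$ for the obvious $V$, is positively one-homogeneous, and satisfies the martingale Bellman inequality $\Bell(e)\le \E\,\Bell(e+\text{(generalized increment)})$. The analogue of the cosine laminate is a \emph{periodic $\pm1$-valued martingale difference over one period}: splitting an atom into $p$ equiprobable children and distributing signs according to a fixed $\pm1$ pattern produces, after one block of time, an average of $\Bell$ over the values $e+s\,u_k$ for a discrete spanning family of vectors $u_k\in\mathbb{R}^p$, $s$ a scalar with a symmetric distribution. Feeding this into the Bellman inequality and applying a discrete analogue of Lemma~\ref{ConvexityWithCosine} (convexity from a mean-value inequality against a symmetric nonconstant distribution) shows $\Bell$ is convex along each $u_k$. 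One must also check these $u_k$ span $\mathbb{R}^p$ — this is the periodic analogue of Lemma~\ref{Obvious} and follows because the relevant Vandermonde-type / character matrix is nonsingular.

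With convexity along a spanning set of directions in hand, restricting $\Bell$ to the subspace spanned by any linearly independent subfamily and invoking Theorem~\ref{SeparateTheorem} gives $\Bell\ge0$, hence $V\ge0$, hence $\bigcap_{j=2}^\ell\Ker\tilde T_{\alpha^j}\subseteq\Ker\tilde T_{\alpha^1}$, which for periodic sequences is exactly the statement that $\alpha^1$ is a linear combination of the other $\alpha^j$ (the Fourier coefficients of the periodic pattern $\alpha^1$ supported on frequencies where some other $\alpha^j$ is nonzero). The main obstacle I expect is the correct bookkeeping in the ``laminate'' step: on the Euclidean side one rescales a smooth test function, but here one must build a genuine martingale whose increments, over a block of length $p$, realize the prescribed $\pm1$ pattern while the accumulated transforms $T_{\alpha^j}f$ telescope correctly and the leftover boundary terms are negligible (analogous to the set $[0,1]^d\setminus B$ in Lemma~\ref{Laminate}). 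Making this rigorous — in particular ensuring the filtration can be chosen to support such martingales and that the period-$p$ structure is genuinely preserved under the Bellman induction — is where the real work lies; the convex-geometric core is already supplied by Theorem~\ref{SeparateTheorem}.
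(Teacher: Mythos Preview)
Your overall strategy --- Bellman function, convexity along a distinguished family of directions, then Theorem~\ref{SeparateTheorem} --- matches the paper's. The execution, however, is considerably more tangled than what the paper does, and in one place your description of the ``laminate'' step is confused.

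The paper puts the Bellman function on~$\mathbb{R}^{\ell}$, not on~$\mathbb{R}^p$: one coordinate per operator, not per residue class. Concretely,
\[
\Bell(x)=\inf_{f\in\M}\Big(\sum_{j=2}^{\ell}\big\|x_j+T_{\alpha^j}f\big\|_{L_1}-c\,\big\|x_1+T_{\alpha^1}f\big\|_{L_1}\Big),\qquad x\in\mathbb{R}^{\ell}.
\]
With this choice the rank-one directions are simply~$v_n=(\alpha^1_n,\ldots,\alpha^{\ell}_n)$, and periodicity guarantees there are only finitely many of them. Crucially, the paper does \emph{not} show these span anything; instead it applies Theorem~\ref{SeparateTheorem} only on their span, getting~$\Bell\ge 0$ there, and then observes that since~$\Bell(x)\le\sum_{j\ge 2}|x_j|-c|x_1|$, the~$x_1$-axis cannot lie in that span. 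By duality this is exactly the statement that~$\alpha^1$ is a linear combination of~$\alpha^2,\ldots,\alpha^{\ell}$. Your~$\mathbb{R}^p$ parametrization forces you to manufacture a spanning family and check a ``Vandermonde-type'' nondegeneracy that is simply absent from the paper's argument; in fact your~$\Bell$ factors through the linear map~$e\mapsto(\tilde T_{\alpha^1}(e),\ldots,\tilde T_{\alpha^{\ell}}(e))$ into the paper's~$\mathbb{R}^{\ell}$, so the extra coordinates are doing no work.

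Your biggest worry --- the laminate construction --- is in this setting the \emph{easiest} step, not the hardest. There is no need for anything like the cosine test function or a set~$B$ with small complement: a single two-point split~$d=\pm s$ with equal probability at time~$n+1$ gives, via the Bellman inequality~$\Bell(x)\le\E\Bell(x+d\,v_n)$, the midpoint inequality along~$v_n$ with no error term. (Periodicity is used to see that the ``time-shifted'' Bellman functions~$\Bell_m$ all coincide, so one gets convexity along every~$v_n$, not just~$v_0$.) The paper says exactly this: ``here we do not have to make additional approximations.'' Your proposed construction --- splitting an atom into~$p$ equiprobable children with a~$\pm1$ sign pattern over one block --- does not produce what you claim; a single time step only moves the state along a single~$v_n$, and the directions~$u_k$ you are after should just be the standard basis of~$\mathbb{R}^p$ (equivalently, the~$v_n$ in~$\mathbb{R}^{\ell}$), obtained one time step at a time. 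The Fourier/character language in your last paragraph is also unnecessary: the conclusion is pure linear algebra on~$\mathbb{R}^{\ell}$.
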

\begin{proof}
To avoid technicalities, we will be working with finite martingales (denote the class of such martingales by~$\M$). The general case can be derived by stopping time. Assume that inequality~\eqref{MartingaleInequality} holds true. Consider the Bellman function~$\Bell: \mathbb{R}^{\ell} \to \mathbb{R}$ given by the formula
\begin{equation*}
\Bell(x) = \inf_{f\in \M}\Big(\sum\limits_{j=2}^{\ell}\big\|x_j + T_{\alpha^j}[f]\big\|_{L_1} - c\big\|x_1 + T_{\alpha^1}[f]\big\|_{L_1}\Big).
\end{equation*}
It is easy to verify that this function is positively one homogeneous and Lipschitz. Moreover,~$\Bell$ is convex in the direction of~$(\alpha^1_n,\alpha^2_n,\ldots,\alpha^{\ell}_n)$ for each~$n$ (by the assumption of periodicity, there is only a finite number of these vectors); the proof of this assertion is a simplification of Theorem~\ref{RankOne} (here we do not have to make additional approximations; however, see~\cite{SZ}, Lemma~$2.17$ for a very similar reasoning). Thus, by Theorem~\ref{SeparateTheorem},~$\Bell$ is non-negative on the span of~$\{(\alpha^1_n,\alpha^2_n,\ldots,\alpha^{\ell}_n)\}_n$. Since~$\Bell(x) \leq \sum_{j \geq 2}|x_j| - c|x_1|$, the aforementioned span does not contain the~$x_1$-axis. Therefore,~$\alpha^1$ is a linear combination of the other~$\alpha^j$. 
\end{proof}

\paragraph{Case~$p>1$.} Inequality~\eqref{MainNonInequality} may become valid provided one replaces the~$L_1$-norm with the~$L_p$ one,~$1 < p < \infty$. Let~$c_p$ be the best possible constant in the inequality
\begin{equation}\label{MainNonInequalityForP}
\|T_1 f\|_{L_p(\mathbb{R}^d)}^p \leq c_p\sum\limits_{j = 2}^{\ell}\|T_j f\|_{L_p(\mathbb{R}^d)}^p.
\end{equation}
It is interesting to compute the asymptotics of~$c_p$ as~$p \to 1$. Some particular cases have been considered in~\cite{BBPW}, we also refer the reader there for a discussion of similar questions.  
\begin{Cj}\label{AsympConj}
Let~$\Lambda$ be a pattern of homogeneity in~$\mathbb{R}^d$\textup, let~$\{T_j\}_{j=1}^{\ell}$ be a collection of~$\Lambda$-homogeneous differential operators. If~$T_1$ cannot be expressed as a linear combination of the other~$T_j$\textup, then~$c_p \gtrsim \frac{1}{p-1}$.
\end{Cj}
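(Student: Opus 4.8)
The plan is to recast the sharp‑constant question in the Bellman / rank‑one‑convexity language of Sections~\ref{S2}--\ref{S4}, reduce it to a statement about a single anisotropic Calder\'on--Zygmund operator, and then read the rate $\tfrac1{p-1}$ off the sharp $L^p$ bounds for the Hilbert and Riesz transforms. Writing $V_p(e)=\sum_{j=2}^{\ell}|\tilde T_j e|^p-c\,|\tilde T_1 e|^p$ and
\begin{equation*}
\Bell_p(e)=\inf_{\varphi\in C_0^\infty([0,1]^d)}\int_{[0,1]^d}V_p\big(e+\gengrad[\varphi](x)\big)\,dx,
\end{equation*}
one checks that the proofs of Theorems~\ref{PropertiesBell} and~\ref{RankOne} go through with only notational changes (they used nothing about the homogeneity degree being one, only local Lipschitzness and positive homogeneity of the integrand): $\Bell_p$ is locally Lipschitz, positively $p$‑homogeneous, generalized quasi convex and hence generalized rank one convex, and inequality~\eqref{MainNonInequalityForP} holds with constant $1/c$ precisely when $\Bell_p(0)\ge 0$, so that $1/c_p=\sup\{c:\Bell_p(0)\ge 0\}$. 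Thus, to prove $c_p\gtrsim\tfrac1{p-1}$ it is enough to show that for some absolute constant $\kappa>0$ and all $p$ close to $1$, the choice $c=\kappa(p-1)$ makes $\Bell_p(0)<0$ --- equivalently, that there is an $f=f_p\in C_0^\infty(\mathbb R^d)$ with
\begin{equation*}
\|T_1 f\|_{L_p}^p\ \ge\ \frac{1}{\kappa(p-1)}\sum_{j=2}^{\ell}\|T_j f\|_{L_p}^p .
\end{equation*}

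I would then invoke a dichotomy. If inequality~\eqref{MainNonInequalityForP} fails for the given $p$ then $c_p=\infty$ and there is nothing to prove, so assume it holds. Testing against functions whose Fourier transforms are supported in a thin angular sector of an anisotropic annulus --- on which each $\Lambda$‑homogeneous symbol $P_j$ is essentially the scalar $P_j(\xi_\ast)$, so that $T_j$ acts almost as multiplication by $P_j(\xi_\ast)$ --- forces the pointwise estimate $|P_1(\xi)|\lesssim_p\sum_{j\ge2}|P_j(\xi)|$. A partition‑of‑unity argument subordinate to the (conical) sets $\{|P_j|\asymp\sum_{j'\ge2}|P_{j'}|\}$ then yields a representation $P_1=\sum_{j\ge2}m_j P_j$ with each $m_j$ bounded, smooth away from the origin and $\Lambda$‑homogeneous of degree zero, i.e.\ $T_1=\sum_{j\ge2}T_{m_j}\circ T_j$ with each $T_{m_j}$ an anisotropic Calder\'on--Zygmund operator. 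Because $T_1$ is \emph{not} a linear combination of the $T_j$, the multipliers $m_j$ cannot all be constant; fix $j_0$ with $m_{j_0}$ non‑constant. (As a by‑product, the standard $L^p$ theory for the $T_{m_j}$ --- Marcinkiewicz interpolation between their weak $(1,1)$ bound and $L^2$‑boundedness --- gives the matching upper bound $c_p\lesssim\tfrac1{p-1}$, so the conjecture would be sharp.)

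The heart of the argument is to extract from $m_{j_0}$ a singular integral of ``Hilbert/Riesz size''. Restricting to the smallest coordinate subspace $\mathbb R^k\subset\mathbb R^d$ on which $m_{j_0}$ is still non‑constant, $m_{j_0}$ becomes a non‑constant bounded multiplier on $\mathbb R^k$ which is smooth away from the origin and homogeneous of degree zero for the induced anisotropic dilations. For such a multiplier one expects a bound $\|T_{m_{j_0}}\|_{L_p(\mathbb R^k)}\gtrsim\tfrac{c}{p-1}$ with $c>0$ depending only on the operators: in the isotropic model one averages $T_{m_{j_0}}$ over rotations against a character $e^{-2ik\theta}$ to isolate a nonzero angular mode $\big((\xi_1+i\xi_2)/|\xi|\big)^{2k}$ of $m_{j_0}$, and that model multiplier (a power of the Beurling transform) already has $L^p$ norm $\gtrsim\tfrac1{p-1}$ --- which in turn descends, by de Leeuw's restriction theorem applied in the lower‑bound direction, from the sharp Pichorides constant $\|H\|_{L_p\to L_p}=\tan(\pi/2p)\asymp\tfrac1{p-1}$ for the Hilbert transform on $\mathbb R$. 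Granting this, I would pick a near‑extremizer $g$ for $T_{m_{j_0}}$ whose Fourier transform is supported in a region $\Omega$ on which, in addition, $|P_{j_0}|\asymp\sum_{j\ge2}|P_j|$, and set $\widehat f=\widehat g/P_{j_0}$. Then $T_{j_0}f\approx g$, so $\|T_1 f\|_{L_p}\approx\|T_{m_{j_0}}g\|_{L_p}\gtrsim\tfrac1{p-1}\|g\|_{L_p}$, while $\sum_{j\ge2}\|T_j f\|_{L_p}\lesssim\|T_{j_0}f\|_{L_p}\asymp\|g\|_{L_p}$; raising to the power $p$ and using $(p-1)^{1-p}\to1$ gives the displayed inequality, hence $c_p\gtrsim\tfrac1{p-1}$.

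The two genuine obstacles lie in the last two steps. First, the compatibility demanded when passing from $g$ to $f$: the extremizer $g$ is forced, in frequency, to concentrate exactly where $m_{j_0}$ is ``rough'', whereas the region $\Omega$ where $|P_{j_0}|$ dominates $\sum_{j\ge2}|P_j|$ is dictated by the zero sets of the remaining symbols, and there is no a priori reason these regions overlap. Reconciling them seems to require either an extremizer adapted to the partition of unity, or --- more in the spirit of this paper --- a multi‑scale laminate construction iterating Lemma~\ref{Laminate} along a harmonic sequence of anisotropic scales, whose $L^p$ bookkeeping must turn the divergent harmonic sum into precisely the factor $\tfrac1{p-1}$ (heuristically, $\sum_k 1/k$ diverges while $\sum_k k^{-p}\asymp\tfrac1{p-1}$). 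Second, the sharp lower bound $\|T_{m_{j_0}}\|_{L_p}\gtrsim\tfrac1{p-1}$ for a general non‑constant \emph{anisotropic}‑homogeneous degree‑zero multiplier: the rotation‑averaging trick has no analogue in the rigidly anisotropic setting --- there is no compact symmetry group to average over --- so one must instead establish the corresponding sharp bounds for anisotropic (parabolic‑type) Riesz transforms on the relevant coordinate subspace by a direct argument. These are exactly the points at which, at present, the plan stops short of a complete proof.
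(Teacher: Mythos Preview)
First, observe that the paper does not prove this statement: it is stated as a \emph{conjecture}, and the only argument the paper offers is a reduction to yet another open statement, Conjecture~\ref{SeparateTheoremP} (a separately convex, positively $p$-homogeneous function $F$ on $\mathbb{R}^d$ with $F\le|\cdot|^p$ must satisfy $F\gtrsim(1-p)|\cdot|^p$). The paper's route is purely geometric: form $\Bell_p$, pass to generalized rank-one convexity, restrict along rank-one directions to obtain a separately convex $p$-homogeneous function, and invoke Conjecture~\ref{SeparateTheoremP}. There is no complete proof to compare against.

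Your route is genuinely different: after the Bellman set-up you abandon the geometric track entirely and attempt a direct harmonic-analytic construction of near-extremizers via a factorization $T_1=\sum_{j\ge2}T_{m_j}T_j$ through anisotropic degree-zero multipliers. Note, incidentally, that the Bellman material in your first paragraph does no work: the reduction ``$\Bell_p(0)<0$ iff some $f_p$ satisfies the displayed inequality'' is just the definition of $c_p$, and you never use rank-one convexity again. Also, your parenthetical upper bound $c_p\lesssim\tfrac1{p-1}$ is false in general: the paper itself notes (citing~\cite{BBPW}) that already for two polynomials one can have $c_p$ as large as $(p-1)^{1-d}$, so not every $T_{m_j}$ arising from your partition of unity can be of weak type $(1,1)$.

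The gaps you flag are real, and the first one is close to circular. Proving $\|T_m\|_{L_p\to L_p}\gtrsim\tfrac1{p-1}$ for an arbitrary non-constant anisotropic degree-zero multiplier $m$ is exactly Conjecture~\ref{AsympConj} for the pair $(T_m,\mathrm{Id})$ transported to the level of the generalized gradient; you have reduced the conjecture to a special case of itself, and the isotropic rotation-averaging trick you invoke has, as you say, no anisotropic analogue. The second gap is equally serious and cuts in the wrong direction: the singularities of $m_{j_0}$ (where its $L_p$-extremizers want to concentrate in frequency) lie on the zero set of $P_{j_0}$, which is precisely where your support condition $|P_{j_0}|\asymp\sum_{j\ge2}|P_j|$ \emph{fails}; so the region where $g$ must live and the region $\Omega$ you need are in general disjoint, and $\widehat f=\widehat g/P_{j_0}$ is most singular exactly where you cannot control the remaining $\|T_jf\|_{L_p}$. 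In short, this is a reasonable heuristic but not a proof, and the obstacles you identify are the essential ones.
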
  
The conjecture claims that if there is no continuity at the endpoint, then the inequality behaves at least as if it had a weak type~$(1,1)$ there (it is also interesting to study when there is a weak type~$(1,1)$ indeed). First, we note that this question is interesting even when there are only two polynomials. Second, this is only a bound from below for~$c_p$. Even in the case of two polynomials,~$c_p$ can be as big as~$(p-1)^{1-d}$ (and thus the endpoint inequality may not be of weak type~$(1,1)$, at least when~$d \geq 3$), see~\cite{BBPW} for the example.

As in the previous point, Conjecture~\ref{AsympConj} will follow from the corresponding geometric statement in the spirit of Theorem~\ref{SeparateTheorem}.
\begin{Cj}\label{SeparateTheoremP}
Let~$F: \mathbb{R}^d \to \mathbb{R}$ be a Lipschitz separately convex~$p$-positively homogeneous function \textup(i.e.\textup,~$F(\lambda x) = |\lambda|^pF(x)$\textup). Suppose that~$F(x) \leq |x|^p$. Then\textup,~$F(x) \gtrsim (1-p)|x|^p$.
\end{Cj}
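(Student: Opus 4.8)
The idea is to mimic the inductive proof of Theorem~\ref{SeparateTheorem}, keeping careful track of the loss of $1-p$ at each step. One proceeds by induction on $d$. For $d=1$ the statement is essentially the inequality for a convex $p$-homogeneous function on the line: such a function has the form $F(x) = a\,x_+^p + b\,x_-^p$ with $a,b \ge 0$, and the constraint $F \le |x|^p$ forces $a,b \le 1$; one has to check that for such functions $F(x) \gtrsim (1-p)|x|^p$, which amounts to the elementary one-variable fact that $(\tfrac12)^p \cdot 2 \ge 1$ with the right first-order behaviour in $1-p$ (note $2^{1-p} = 1 + (1-p)\log 2 + o(1-p)$), hence $F(1) + F(-1) \ge 2^{1-p}F_{\mathrm{sym}}(1)$ type estimates give a gain of order $1-p$ over the midpoint. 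I would isolate this base case as a short computational lemma. For the inductive step, set $G(x) = F(x,1)$ for $x \in \mathbb{R}^{d-1}$; as in the proof of Theorem~\ref{SeparateTheorem}, $G$ is separately convex and convex along rays emanating from the origin, and $G(x) \le (|x|^2+1)^{p/2}$.

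The heart of the matter is to upgrade ``convex along rays'' to a quantitative statement about $\mathbb{R} \ni t \mapsto G(tx)$ and then to feed the missing dimension into the induction hypothesis. Define, in analogy with the function $V$ of the proof of Theorem~\ref{SeparateTheorem},
\begin{equation*}
V(x) = \liminf_{t \to 0+} \frac{G(tx) + G(-tx) - 2G(0)}{t^{\min(1,\,?)}},
\end{equation*}
but here the correct normalisation is delicate because $G$ is not one-homogeneous; instead I would work directly with the second-order behaviour. The cleaner route is: by the $p$-homogeneity of $F$ one has $G(tx) = (1+t^2|x|^2\cdot(\text{something}))$-type expansions, and convexity of $t \mapsto G(tx)$ combined with $G(0) = F(0,1) = F(0)$ and the growth bound gives an inequality $G(x) + G(-x) \ge 2G(0) + c(1-p)\,(\text{homogeneous degree-}p\text{ term in }x)$, where the degree-$p$ term is governed by a lower-dimensional separately convex $p$-homogeneous function to which the induction hypothesis applies. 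One then combines $G(x)+G(-x) \ge 2F(x,0) + (\text{gain})$ with $|G(x)-G(-x)| \le 2F(x,0) + (\text{lower order})$ exactly as in Theorem~\ref{SeparateTheorem}, and adds, obtaining $F(x,1) \gtrsim (1-p)(|x|^2+1)^{p/2}$ uniformly; rescaling recovers the claim for all of $\mathbb{R}^d$.

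The main obstacle, and the place where this plan is genuinely harder than the $p=1$ case, is that the function $V$ (or whatever second-order surrogate replaces it) need no longer be $p$-homogeneous, nor is it obvious it is finite and separately convex with the right sign; for $p=1$ the homogeneity of $F$ made $V$ automatically one-homogeneous and the argument closed cleanly, whereas for $p < 1$ one must extract a genuinely degree-$p$ homogeneous ``Hessian-like'' object and argue its finiteness by the same ``propagate the value $-\infty$ to the origin'' trick — but now the contradiction at the origin is no longer $V(0) = 0$ for free, so one needs the growth bound $F \le |x|^p$ to pin it down. A second, more quantitative, difficulty is bookkeeping: one must ensure the constants $1-p$ compound additively rather than multiplicatively over the $d$ inductive steps, so that the final bound is $\gtrsim (1-p)|x|^p$ and not $(1-p)^d|x|^p$; I expect this forces one to prove a slightly stronger inductive hypothesis, e.g. with an explicit dimension-independent constant in front of $1-p$, or to linearise in $1-p$ from the start and track only the first-order term. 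Verifying that the linearisation is legitimate (that the error terms are $o(1-p)$ uniformly in $x$ on the relevant compact sets) is where I would expect to spend the most effort.
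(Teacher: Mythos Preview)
There is nothing to compare against: in the paper this statement is a \emph{Conjecture}, not a theorem, and no proof is given. The paper only remarks that the case~$d=2$ ``is not difficult to verify'' and leaves the general case open. So your proposal is not an alternative to the paper's argument; it is an attempt at something the authors explicitly did not do.

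On the substance of your plan, two remarks. First, your base case is overcomplicated. The hypothesis~$F(\lambda x)=|\lambda|^p F(x)$ with~$\lambda=-1$ forces~$F$ to be \emph{even}, so in dimension one~$F(x)=F(1)\,|x|^p$; convexity (with~$p>1$) then gives~$F(1)\geq 0$, hence~$F\geq 0$ outright, which is already stronger than~$F\gtrsim(1-p)|x|^p$ since~$1-p<0$. No midpoint estimate of the type~$2\cdot(1/2)^p$ is needed. (You also slip and write ``$p<1$'' once; the regime here is~$p>1$, $p\to 1^+$.)

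Second, the obstacles you flag in the inductive step are real, and your proposal does not overcome them. In the proof of Theorem~\ref{SeparateTheorem} the auxiliary function~$V$ is again one-homogeneous and separately convex, so the induction closes cleanly; for~$p>1$ the analogous object you try to write down (with an unspecified exponent in the normalisation) is neither obviously~$p$-homogeneous nor obviously separately convex, and you offer no substitute mechanism. Likewise, your worry that the constants compound to~$(1-p)^d$ rather than~$(1-p)$ is legitimate, and ``prove a slightly stronger inductive hypothesis'' is a hope, not an argument. As it stands, what you have written is an outline of where the difficulties lie, which matches the paper's own assessment that this is a conjecture; it is not a proof.
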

Conjecture~\ref{AsympConj} is derived from Conjecture~\ref{SeparateTheoremP} in the same way as Theorem~\ref{MainTheoremBis} derived from Theorem~\ref{SeparateTheorem}: one considers the Bellman function~\eqref{Bellman} with the function~$V$ given by the formula
\begin{equation*}
V(e) = \Big(c_p\sum\limits_{j = 2}^{\ell}|\tilde{T}_j e|^p - |\tilde{T}_1 e|^p\Big), 
\end{equation*}
proves its generalized quasi convexity, which leads to the generalized rank one convexity, and then uses Conjecture~\ref{SeparateTheoremP} to estimate~$c_p$ from below.

It is not difficult to verify the case~$d = 2$ of Conjecture~\ref{SeparateTheoremP}. Therefore, there exists a~$C_0^{\infty}$-function~$f_p$ such that
\begin{equation*}
(p-1)\|\partial_1\partial_2f_p\|_{L_p(\mathbb{R}^2)} \gtrsim \Big(\|\partial_1^2f_p\|_{L_p(\mathbb{R}^2)} + \|\partial_2^2f_p\|_{L_p(\mathbb{R}^2)}\Big).
\end{equation*}

Krystian Kazaniecki

Institute of Mathematics, University of Warsaw.

\medskip

krystian dot kazaniecki at mimuw dot edu dot pl.

\bigskip
 
Dmitriy M. Stolyarov

Institute for Mathematics, Polish Academy of Sciences, Warsaw;

P. L. Chebyshev Research Laboratory, St. Petersburg State University;

St. Petersburg Department of Steklov Mathematical Institute, Russian Academy of Sciences (PDMI RAS).

\medskip

dms at pdmi dot ras dot ru, dstolyarov at impan dot pl.

\medskip

http://www.chebyshev.spb.ru/DmitriyStolyarov.

\bigskip

Michal Wojciechowski

Institute of Mathematics, Polish Academy of Sciences;

Institute of Mathematics, University of Warsaw.

\medskip

m dot wojciechowski at impan dot pl.

\end{document}